\tikzset{elegant/.style={smooth,thick,samples=50,cyan}}
\tikzset{eaxis/.style={->,>=stealth}}
\tikzset{liltext/.style={font=\tiny}}
\newtheorem{thm}{Theorem}[section]
\newtheorem{prop}{Proposition}[section]
\newtheorem{lem}{Lemma}[section]
\newtheorem{rem}{Remark}[section]
\newcommand{\ml}{\mathcal}
\newcommand{\mb}{\mathbb}
\newcommand{\dd}{\mathrm{d}}
\newcommand{\ee}{\mathrm{e}}
\DeclareMathOperator{\divv}{div}
\DeclareMathOperator{\lin}{lin}
\DeclareMathOperator{\non}{non}
\begin{document}

%
%
%
%
%
%
%
%

\title[Semilinear strongly damped wave equations in an exterior domain]
 {A global existence result for two-dimensional semilinear strongly damped wave equation with mixed nonlinearity in an exterior domain}

\author[W. Chen]{Wenhui Chen}
\address{Institute of Applied Analysis, Faculty of Mathematics and Computer Science\\
	 Technical University Bergakademie Freiberg\\
	  Pr\"{u}ferstra{\ss}e 9\\
	   09596 Freiberg\\
	    Germany}
\email{wenhui.chen.math@gmail.com}

\author[A.Z. Fino]{Ahmad Z. Fino}

\address{Department of Mathematics, Faculty of Sciences\\
	 Lebanese University\\
	P.O. Box 826\\
	Tripoli\\
	Lebanon}
\email{ahmad.fino01@gmail.com; afino@ul.edu.lb}

\subjclass{Primary 35A01; Secondary 35L20, 35L05}

\keywords{{Semilinear damped wave equation, exterior domain, strong damping, mixed nonlinearity, global existence.}
}
\date{October 25, 2019}

\begin{abstract} 
We study two-dimensional semilinear strongly damped wave equation with mixed nonlinearity $|u|^p+|u_t|^q$ in an exterior domain, where $p,q>1$. Assuming the smallness of initial data in exponentially weighted spaces and some conditions on powers of nonlinearity, we prove global (in time) existence of small data energy solution with suitable higher regularity by using a weighted energy method.
\end{abstract}

\maketitle

\section{Introduction}
In this paper, we consider the following initial boundary value problem for two-dimensional semilinear strongly damped wave equation with mixed nonlinearity:
\begin{equation}\label{Eq.Semi.BVP}
\begin{cases}
u_{tt}-\Delta u -\Delta u_t =|u|^p+|u_t|^q, &x\in \Omega,\,t>0,\\
u(0,x)=  u_0(x),\,\,u_t(0,x)=  u_1(x),&x\in \Omega,\\
u=0,&x\in \partial\Omega,\,t>0,
\end{cases}
\end{equation} 
with $p,q>1$, where $\Omega\subset\mathbb{R}^2$ is an exterior domain with a compact smooth boundary $\partial\Omega$. Without loss of generality, we may assume that $0\notin\overline{\Omega}$.

 In the whole space $\mb{R}^n$, the Cauchy problem for the linear strongly damped wave equation can be modeled by
 \begin{equation}\label{Eq.Linear.SDW}
 \begin{cases}
 u_{tt}-\Delta u-\Delta u_t=0,&x\in\mathbb{R}^n,\,t>0,\\
 u(0,x)=  u_0(x),\,\,u_t(0,x)=  u_1(x),&x\in \mathbb{R}^n,
 \end{cases}
 \end{equation}
 where $n\geqslant 1$. The authors of \cite{Ponce1985} and \cite{Shibata2000} investigated some $L^p-L^q$ estimates away of the conjugate line for the Cauchy problem \eqref{Eq.Linear.SDW}. Recently, asymptotic profiles of solutions in $L^2$-norm to the Cauchy problem \eqref{Eq.Linear.SDW} under a framework of weighted $L^1$ data were derived in \cite{Ikehata2014}. Concerning asymptotic profiles of solutions to the corresponding abstract form of strongly damped waves were derived in  \cite{IkehataTodorovaYordanov2013}. The authors of \cite{DAbbiccoReissig2014} obtained $(L^2\cap L^1)-L^2$ estimate for \eqref{Eq.Linear.SDW} by using the partial Fourier transform and WKB analysis. By employing some energy estimates with suitable regularities, additionally, they proved global (in time) existence of small data solution to the semilinear Cauchy problem. More precisely, \cite{DAbbiccoReissig2014} considered the semilinear Cauchy problem with power nonlinearity $|u|^p$, namely,
 \begin{equation}\label{Eq.Semilinear.SDW}
 \begin{cases}
 u_{tt}-\Delta u-\Delta u_t=|u|^p,&x\in\mathbb{R}^n,\,t>0,\\
 u(0,x)=  u_0(x),\,\,u_t(0,x)=  u_1(x),&x\in \mathbb{R}^n,
 \end{cases}
 \end{equation}
 where $p>1$ and $n\geqslant 2$. The authors of \cite{DAbbiccoReissig2014} proved global existence results for $n\geqslant2$ if $p>1+3/(n-1)$ and $p\in[2,n/(n-4)]$. Besides, applying the test function method, the result for nonexistence of global (in time) solutions has been proved providing that $1<p\leqslant 1+2/(n-1)$. Here, we refer to Theorem 4.2 in \cite{D'AmbrosioLucente2003}.
 
 Let us turn to initial boundary value problem with an exterior domain.  To the best of the authors' knowledge, there exist few results on the exterior problem for strongly damped wave equation.  We refer to  \cite{Ikehata2001} for some estimates of solutions to the linearized exterior problem by using suitable energy method. Later, \cite{Ikehata-Inoue2008} proved global existence result for the semilinear strongly damped wave equation with $|u|^p$ in 2D when $p>6$. To be specific, there is a uniquely determined energy solution
 \begin{align*}
 u\in\ml{C}\left([0,\infty),H_0^1(\Omega)\right)\cap \ml{C}^1\left([0,\infty),L^2(\Omega)\right)
 \end{align*}
 to the exterior problem
 \begin{equation}\label{Eq.Semi.|u|^p.BVP}
 \begin{cases}
 u_{tt}-\Delta u -\Delta u_t =|u|^p, &x\in \Omega,\,t>0,\\
 u(0,x)=  u_0(x),\,\,u_t(0,x)=  u_1(x),&x\in \Omega,\\
 u=0,&x\in \partial\Omega,\,t>0,
 \end{cases}
 \end{equation} 
 where $\Omega\subset\mathbb{R}^2$ is an exterior domain with a compact smooth boundary, for small initial data taken from weighted energy space and  $p>6$. On the other hand, the blow-up  result for the exterior problem \eqref{Eq.Semi.|u|^p.BVP} for all dimensions has been recently derived by \cite{Fino}. Considering the semilinear exterior problem for strongly damped wave with nonlinearity of derivative-type $|u_t|^q$ or nonlinearity of mixed-type $|u|^p+|u_t|^q$, the authors of \cite{Chen-Fino2019} proved local (in time) existence of mild solution by energy estimates associated with the Banach fixed-point theorem and blow-up of solutions by the test function method in all dimension $n\geqslant 1$. For the other studied of damped wave equations in an exterior domain, we refer to \cite{Ikehata2003,One2003,Ikehata2004,Hayashi-Kaikina-Naumkin2004,Ogawa-Takeda2009,Lai-Yin2017,Fino-Ibrahim-Wehbe2017,Sobajima2019,Dabbicco-Takeda-Ikehata2019} and references therein. So far the global (in time) existence of energy solution with higher regularity for semilinear strongly damped wave equations in an exterior domain is still unknown. Hence, in the present paper, we will give the answer to it by constructing some exponentially weighted energy estimates of higher-order.
 
 Our main approach in proving global existence result is a exponentially weighted energy method, which has been introduced firstly in the pioneering work \cite{Todorova-Yordanov2001} and later in \cite{Ikehata-Tanizawa2005} with certain exponentially weighted spaces. In some sense, this method strongly relies on the choice of the weighted function. In this paper, motivated by \cite{Ikehata-Inoue2008}, we introduce the weighted function by the following way:
 \begin{align}\label{Weighted.Fun}
 \psi(t,x)\doteq\frac{1}{\rho(1+t)^\rho}+\frac{|x|^2}{2(1+t)^{2+\rho}}
 \end{align}
 with a constant $\rho>\rho_0$, where
 \begin{align*}
 \rho_0\doteq\frac{-3+\sqrt{73}}{4}\approx1.386
 \end{align*}
  is the positive root of the quadratic equation 
 \begin{align*}
 2\rho^2_0+3\rho_0-8=0.
 \end{align*} 
 In order to prove existence of higher-order energy solution, we have to construct the exponentially weighted estimates for higher-order energy to \eqref{Eq.Semi.BVP}, that is
 \begin{align*}
 \left\|\ee^{\psi(t,\cdot)}(\partial_t,\nabla)u(t,\cdot)\right\|_{L^2}^2\quad\mbox{and}\quad \left\|\ee^{\psi(t,\cdot)}\nabla(\partial_t,\nabla)u(t,\cdot)\right\|_{L^2}^2.
 \end{align*}
  We would like to remark that the estimate of the second energy mentioned in the above is more complex than those of the first energy, which will be shown in Propositions \ref{prop1} and \ref{prop2} in Section \ref{Sec.Weighted.Nonlinear}.

  Let us point out that the study of the exterior problem with $|u|^p+|u_t|^q$ is not simply a generalization of what happens for the exterior problem with $|u|^p$ shown in \cite{Ikehata-Inoue2008}. On one hand, the application of the Gagliardo-Nirenberg inequality with a weighted function (e.g. Lemma 2.3 in \cite{Ikehata-Inoue2008}) allows us to estimate the nonlinear term $|u_t|^q$ in a weighted $L^q$ space by its gradient in $L^2$ space and it in weighted $L^2$ space. Thus, we need to control a new energy of nonlinear exterior problems. On the other hand, the interplay between the power nonlinearity $|u|^p$ and nonlinearity of derivative-type $|u_t|^q$ should be considered, which does not happen for the nonlinear problem carrying $|u|^p$ only.

 Before stating our global existence result, we show some notations will be used in this paper.

  By direct computations, the next properties for the function $\psi(t,x)$ are fulfilled:
 \begin{align*}
 \psi_t(t,x)<0,\quad\Delta\psi(t,x)=\frac{2}{(1+t)^{2+\rho}}\quad\mbox{and}\quad-\psi_t(t,x)\leqslant\frac{C_{\rho}}{1+t}\psi(t,x),
 \end{align*}
 where the positive constant $C_{\rho}$ independent of $x$ and $t$. Furthermore, it holds that
 \begin{align}\label{est15}
 |\nabla\psi(t,x)|^2-\psi_t(t,x)|\nabla\psi(t,x)|^2-|\psi_t(t,x)|^2\leqslant 0.
 \end{align}
 We assume $\varepsilon>0$ be an auxiliary constant satisfying
 \begin{equation}\label{epsilon}
 \frac{4\rho+14}{(2+\rho)(2\rho+3)}\leqslant\varepsilon<1.
 \end{equation} 
 
 Next, we define the space-dependent function (see Theorem 1.1 in \cite{Ikehata2001})
 \begin{align*}
 d(x)\doteq |x|\log(B|x|)
 \end{align*} 
 with a positive constant $B$ such that $\inf\limits_{x\in\Omega}|x|\geqslant 2/B>0$.
 
 Finally, we introduce a norm for initial data such that
 \begin{align*}
 \ml{J}[u_0,u_1]\doteq\sum\limits_{j=0,1}\left(\|u_j\|_{L^2}^2+\|\nabla u_j\|_{L^2}^2\right)+\|\Delta u_0\|_{L^2}^2+\|d(\cdot)\Delta u_0\|_{L^2}^2+\|d(\cdot)u_1\|_{L^2}^2+I_{\mathrm{exp}}[u_0,u_1],
 \end{align*}
 where the exponentially weighted norm for initial data is define by
 \begin{align}\label{Weighted.Data}
 I_{\mathrm{exp}}[u_0,u_1]\doteq\int_{\Omega}\ee^{2\psi(0,x)}\left(|\nabla u_1(x)|^2+|\Delta u_0(x)|^2+|u_1(x)|^2+|\nabla u_0(x)|^2\right)\dd x.
 \end{align}
 
 Let us state our main result.
\begin{thm}\label{Thm.GESDS} Let us assume 
	\begin{align}\label{Condition.p,q}
	p>6+2\rho_0\quad\mbox{and}\quad q>6+2\rho_0.
	\end{align}
	Then, there exists a constant $\varepsilon_0>0$ such that for any
	\begin{align*}
	(u_0,u_1)\in \left(H^2(\Omega)\cap H^1_0(\Omega)\right)\times H^1(\Omega)
	\end{align*}
with $\ml{J}[u_0,u_1]\leqslant\varepsilon_0$, there is a uniquely determined energy solution of higher-order 
	\begin{align*}
	u\in\ml{C}\left([0,\infty),H^2(\Omega)\cap H^1_0(\Omega)\right)\cap\ml{C}^1\left([0,\infty),H^1(\Omega)\right)
	\end{align*}
	to \eqref{Eq.Semi.BVP}. Furthermore, the solution satisfies the following estimates:
	\begin{align*}
	\|u(t,\cdot)\|_{L^2}^2&\leqslant C \ml{J}[u_0,u_1],
	\end{align*}
	\begin{align*}
	\|u_t(t,\cdot)\|_{L^2}^2+\|\nabla u(t,\cdot)\|_{L^2}^2+\|\nabla u_t(t,\cdot)\|_{L^2}^2+\|\Delta u(t,\cdot)\|_{L^2}^2\leqslant C(1+t)^{-1}\ml{J}[u_0,u_1],
	\end{align*}
	\begin{align*}
	\left\|\ee^{\psi(t,\cdot)}u_t(t,\cdot)\right\|_{L^2}^2+\left\|\ee^{\psi(t,\cdot)}\nabla u(t,\cdot)\right\|_{L^2}^2+\left\|\ee^{\psi(t,\cdot)}\nabla u_t(t,\cdot)\right\|_{L^2}^2+\left\|\ee^{\psi(t,\cdot)}\Delta u(t,\cdot)\right\|_{L^2}^2\leqslant C\ml{J}[u_0,u_1],
	\end{align*}
	for any $t\geqslant 0$.
\end{thm}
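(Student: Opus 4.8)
The plan is to establish the global existence result via a standard contraction-mapping argument combined with the a priori estimates obtained from the exponentially weighted energy method. First I would set up the solution space: define
\[
X(T)\doteq\ml{C}\left([0,T],H^2(\Omega)\cap H^1_0(\Omega)\right)\cap\ml{C}^1\left([0,T],H^1(\Omega)\right),
\]
equipped with a norm $\|u\|_{X(T)}$ that incorporates all the quantities appearing in the conclusion, namely the polynomially decaying bounds for $\|u_t\|_{L^2}^2+\|\nabla u\|_{L^2}^2+\|\nabla u_t\|_{L^2}^2+\|\Delta u\|_{L^2}^2$ together with the uniform-in-time bounds for the exponentially weighted energies $\|\ee^{\psi}u_t\|_{L^2}^2$, $\|\ee^{\psi}\nabla u\|_{L^2}^2$, $\|\ee^{\psi}\nabla u_t\|_{L^2}^2$, $\|\ee^{\psi}\Delta u\|_{L^2}^2$, and the non-weighted $\|u\|_{L^2}^2$. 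Local existence of a mild/energy solution is available from \cite{Chen-Fino2019}, so the work is entirely in the global-in-time a priori estimate, which is upgraded to global existence by continuation.

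The core of the argument is the weighted energy inequality, which I would derive by multiplying the equation (and its spatial derivatives) by $\ee^{2\psi}u_t$, $\ee^{2\psi}(-\Delta u_t)$, and appropriate combinations, then integrating over $\Omega$. The boundary condition $u=0$ on $\partial\Omega$ kills the boundary terms arising from integration by parts in $\Omega$ (this is where the exterior-domain geometry, together with the $d(x)$-weighted data norm needed to control the Dirichlet trace contributions à la \cite{Ikehata2001}, enters). The properties listed just before the theorem — $\psi_t<0$, $\Delta\psi=2(1+t)^{-2-\rho}$, $-\psi_t\le C_\rho(1+t)^{-1}\psi$, and crucially the sign inequality \eqref{est15} $|\nabla\psi|^2-\psi_t|\nabla\psi|^2-|\psi_t|^2\le0$ — are exactly what is needed to absorb the indefinite terms (those containing $\ee^{2\psi}\psi_t|u_t|^2$, $\ee^{2\psi}\nabla\psi\cdot\nabla u\,u_t$, etc.) into the good dissipative term $\|\ee^\psi\nabla u_t\|_{L^2}^2$ coming from the strong damping $-\Delta u_t$. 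The auxiliary parameter $\varepsilon$ subject to \eqref{epsilon} is presumably used to split the weighted energy into a decaying part and a uniformly bounded part so that both the polynomial-decay and the uniform-bound statements follow simultaneously.

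The nonlinear estimates are the part where the mixed nonlinearity genuinely complicates matters. For the $|u|^p$ term one estimates $\|\ee^{\psi}|u|^p\|_{L^2}\lesssim\|\ee^{\psi/p}u\|_{L^{2p}}^p$ and uses the weighted Gagliardo-Nirenberg inequality (Lemma 2.3-type statement from \cite{Ikehata-Inoue2008}) in dimension two to interpolate the $L^{2p}$ norm between $\|\ee^{\psi}u\|_{L^2}$-type and $\|\ee^{\psi}\nabla u\|_{L^2}$-type quantities; the condition $p>6+2\rho_0$ is what makes the resulting power of $(1+t)$ integrable. For the $|u_t|^q$ term the analogous interpolation must be applied to $u_t$, which is why one is forced to carry the second-order energy $\|\ee^{\psi}\nabla u_t\|_{L^2}^2$ in the norm — there is no lower-order substitute — and $q>6+2\rho_0$ plays the same role. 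I would then close the estimate $\|u\|_{X(T)}^2\le C\ml{J}[u_0,u_1]+C\|u\|_{X(T)}^{2p}+C\|u\|_{X(T)}^{2q}$ by a continuity/bootstrap argument, choosing $\varepsilon_0$ small; the $L^2$ bound $\|u(t,\cdot)\|_{L^2}^2\le C\ml{J}$ follows from a separate, simpler energy identity (multiplying by $u$, using Poincaré-type control afforded by the exterior geometry and the $d(x)$-weight). The uniqueness and the contraction estimate for the difference of two solutions follow by the same computation applied to the linearized problem. The main obstacle, in my estimation, is the derivation of Proposition \ref{prop2}: obtaining a \emph{closed} differential inequality for the second-order weighted energy $\|\ee^{\psi}\nabla(\partial_t,\nabla)u\|_{L^2}^2$, because differentiating the equation produces terms like $\nabla(|u|^p)=p|u|^{p-2}u\nabla u$ and $\nabla(|u_t|^q)=q|u_t|^{q-2}u_t\nabla u_t$ that couple back into precisely the highest-order quantity one is trying to bound, so the weighted Gagliardo-Nirenberg interpolation has to be balanced very carefully against the dissipation to keep the time-weights on the right side of integrability.
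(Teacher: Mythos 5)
Your overall framework --- a weighted solution norm on $X(T)$ carrying both the polynomially decaying energies and the uniformly bounded exponentially weighted energies, closed by weighted Gagliardo--Nirenberg interpolation and a smallness argument --- matches the paper's, which constructs the solution as a fixed point of the Duhamel operator $Nu=u^{\lin}+\int_0^t E_1(t-s,\cdot)\ast_{(\cdot)}F(u)(s,\cdot)\,\dd s$ rather than by local existence plus continuation. However, the step you yourself single out as the main obstacle --- closing the top-order estimate in the presence of $|u_t|^q$ --- is left unresolved in your proposal, and it is precisely there that the paper's two decisive devices live. First, the paper never differentiates the equation: the top-order weighted identity (Proposition \ref{prop1}) is obtained by testing with $\ee^{2\psi}\Delta u_t$, so the nonlinearity enters only as $\ee^{2\psi}F(u)\Delta u_t$ and is absorbed by Young's inequality into the dissipation $(1-\varepsilon)\ee^{2\psi}|\Delta u_t|^2$, leaving $\|\ee^{\psi}F(u)\|_{L^2}^2$, which requires no derivatives of $|u|^p+|u_t|^q$ (Lemma \ref{lemma1}); crucially this identity also yields the space--time dissipation bound on $\int_0^t\|\ee^{\psi(s,\cdot)}\Delta u_t(s,\cdot)\|_{L^2}^2\,\dd s$. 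Second, where $\nabla(|u_t|^q)$ genuinely appears (in the unweighted estimate of $\ml{D}_2u^{\non}$ through $\|\nabla u_t\|_{L^{2q}}$), the paper interpolates $\|\nabla u_t\|_{L^{2q}}\lesssim\|\nabla u_t\|_{L^2}^{1-\theta}\|\Delta u_t\|_{L^2}^{\theta}$ and then applies H\"older's inequality \emph{in time} to trade the factor $\|\Delta u_t\|_{L^2}^{\theta}$ against that space--time dissipation integral (estimate \eqref{est34}); this is exactly how the feared coupling back into the highest-order quantity is broken, and it is the source of the extra term $\widetilde{M}[u](t;p,q)$ in \eqref{Important.01}. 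Without both ingredients your scheme does not close.

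Two smaller corrections. The parameter $\varepsilon$ in \eqref{epsilon} is not used to split the energy into a decaying and a bounded part; its lower bound makes the discriminant in \eqref{est7} nonpositive, which is what allows the cross term $2\ee^{2\psi}u_{tt}\nabla\psi\cdot\nabla u_t$ in the top-order identity to be dominated via \eqref{est9}, and this is also the origin of the threshold $\rho>\rho_0$ (hence of $6+2\rho_0$ in \eqref{Condition.p,q}), while the upper bound $\varepsilon<1$ keeps the dissipation coefficient positive. Also, the exponent restrictions are not forced by the weighted nonlinear estimates themselves (Lemmas \ref{lemma1} and \ref{lemma2} need only $p,q>4+\rho$ and $p,q>5+\rho$); the binding constraint $q>6+2\rho$ arises in the unweighted $L^2$ estimate of $\ml{D}_1^ju_{N_2}$, where $\|d(\cdot)|u_t|^q\|_{L^2}$ must be controlled through a weighted $L^{2q}$ norm at the cost of a growing factor $(1+s)^{(2+\rho)(1+\varepsilon_1)/2}$.
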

\begin{rem}
Here, we emphasize that  one cannot not compare the hereinbefore proposed result with those result of the previous research \cite{Ikehata-Inoue2008}. First of all, the nonlinear term of what we treat is different from \cite{Ikehata-Inoue2008}. What's more, as mentioned before, the authors of \cite{Ikehata-Inoue2008} proved global (in time) existence for classical energy solution
\begin{align*}
u\in\ml{C}\left([0,\infty),H^1_0(\Omega)\right)\cap\ml{C}^1\left([0,\infty),L^2(\Omega)\right)
\end{align*}
for semilinear strongly damped wave equation with power nonlinearity $|u|^p$, where initial data are taken from $(H^2(\Omega)\cap H_0^1(\Omega))\times L^2(\Omega)$. What we do in this paper is to derive the global (in time) existence of energy solution of higher-order. It seems reasonable to have a stronger condition on $p$.
\end{rem}

\begin{rem}
Considering the weighted function $\psi(t,x)$ defined in \eqref{Weighted.Fun}, roughly speaking, the reason for us to consider $\rho>\rho_0$ is in the derivation of weighted estimates for higher-order energy. We will see later in Proposition \ref{prop1}.
\end{rem}

\begin{rem}
The critical curve $\Upsilon(n)$ of the $n$-dimensional semilinear strongly damped wave equation in an exterior domain carrying mixed nonlinear term $|u|^p+|u_t|^q$ is still open. Here, the critical curve means that if a pair of exponents $p$ and $q$ are above the curve $\Upsilon(n)$, there exists global (in time) small data solution; on the contrary, if the exponents are on or below the curve $\Upsilon(n)$, every local (in time) solutions blows up in finite time even with small data. However, under the assumption of initial data taken from energy space with suitable higher regularity, the blow-up of solutions with suitable condition on the exponent is still unknown. For this reason, so far we cannot tell whether or not the restriction \eqref{Condition.p,q} is the critical curve in 2D.
\end{rem}

 The remaining part of the present paper is organized as follows. In Section \ref{Sec.Linear}, we derive energy estimates for the corresponding linear homogeneous problem to \eqref{Eq.Semi.BVP}. In Section \ref{Sec.Weighted.Nonlinear}, some exponentially weighted $L^2$ estimates for nonlinear strongly damped wave equation are obtained. In Section \ref{Sec.Proof.Thm}, we prove Theorem \ref{Thm.GESDS}. Finally, final remark in Section \ref{Sec.Final.Remark} completes the paper.

\section{Energy estimates for linear homogeneous strongly damped wave equation}\label{Sec.Linear}
\setcounter{equation}{0}
In the section, we are concerned with energy estimates for the corresponding linearized equation to \eqref{Eq.Semi.BVP}, namely,
\begin{equation}\label{eq1}
\begin{cases}
u_{tt}-\Delta u -\Delta u_t =0, &x\in \Omega,\,t>0,\\
u(0,x)=  u_0(x),\,u_t(0,x)=  u_1(x),&x\in \Omega,\\
u=0,&x\in \partial\Omega,\,t>0.
\end{cases}
\end{equation} 

To do this, let us define an energy containing higher-order derivative of solutions for \eqref{eq1} firstly
\begin{align*}
E[u](t)\doteq \frac{1}{2}\left(\|u_t(t,\cdot)\|_{L^2}^2+\|\nabla u(t,\cdot)\|^2_{L^2}+\|\nabla u_t(t,\cdot)\|_{L^2}^2+\|\Delta u(t,\cdot)\|_{L^2}^2\right).
\end{align*}

We found that this energy is in a higher-order sense, which is different from total energy defined in \cite{Ikehata2001}. Therefore, we need to derive a new estimate of the energy $E[u](t)$ in the next lemma.
\begin{lem}\label{Lemma.Linear}
	Let us assume
	\begin{align*}
	(u_0,u_1)\in \left(H^2(\Omega)\cap H^1_0(\Omega)\right)\times  H^1(\Omega)
	\end{align*}
	 satisfying $\left\|d(\cdot)(u_1-\Delta u_0)\right\|_{L^2}<\infty$. Then, the following energy estimate for \eqref{eq1} holds:
	\begin{align}\label{est23}
	E[u](t)\leqslant  (1+t)^{-1}I_2[u_0,u_1],
	\end{align}
	where the constant $I_2[u_0,u_1]$ with respect to initial data will be defined in \eqref{est38} later.
\end{lem}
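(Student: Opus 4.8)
The plan is to prove a decay estimate of the form $(1+t)E[u](t)\le I_2[u_0,u_1]$ by a weighted-in-time energy argument combined with a Hardy-type (logarithmic) inequality that is specific to two space dimensions. First I would derive the basic energy identity for \eqref{eq1}: multiplying the equation by $u_t$ and integrating over $\Omega$, the boundary term vanishes because $u_t=0$ on $\partial\Omega$ (as $u=0$ there for all $t$), and one obtains
\[
\frac{\dd}{\dd t}\,\frac12\!\left(\|u_t\|_{L^2}^2+\|\nabla u\|_{L^2}^2\right)+\|\nabla u_t\|_{L^2}^2=0.
\]
Next, differentiating the equation in $t$ (or equivalently applying $\partial_t$ and testing with $u_{tt}$) and also testing the original equation with $-\Delta u_t$, I would produce the second-order identity controlling $\|\nabla u_t\|_{L^2}^2$ and $\|\Delta u\|_{L^2}^2$; again all boundary contributions drop because of the homogeneous Dirichlet condition on $u$ and $u_t$. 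Adding these up gives
\[
\frac{\dd}{\dd t}\,E[u](t)+\|\Delta u_t\|_{L^2}^2+\|\nabla u_t\|_{L^2}^2\le 0,
\]
so $E[u]$ is nonincreasing and the full dissipation $\|\nabla u_t\|_{L^2}^2+\|\Delta u_t\|_{L^2}^2$ is integrable in time.

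To upgrade this boundedness to the $(1+t)^{-1}$ decay rate, I would multiply the differential inequality for $E[u](t)$ by the weight $(1+t)$ and integrate, obtaining
\[
(1+t)E[u](t)+\int_0^t(1+s)\big(\|\nabla u_t(s)\|_{L^2}^2+\|\Delta u_t(s)\|_{L^2}^2\big)\,\dd s\le E[u](0)+\int_0^t E[u](s)\,\dd s.
\]
The task is then to bound $\int_0^t E[u](s)\,\dd s$. The lower-order pieces $\|u_t\|_{L^2}^2$ and $\|\nabla u_t\|_{L^2}^2$ are directly time-integrable from the two energy identities above. The genuinely delicate pieces are $\|\nabla u\|_{L^2}^2$ and $\|\Delta u\|_{L^2}^2$: to control their time integrals one must exploit the structure of the equation together with an auxiliary multiplier. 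The standard device — and the reason the function $d(x)=|x|\log(B|x|)$ and the hypothesis $\|d(\cdot)(u_1-\Delta u_0)\|_{L^2}<\infty$ appear — is to multiply \eqref{eq1} (and its Laplacian) by $u$ and $\Delta u$ respectively and to use the fact that $w\doteq u_t-\Delta u$ satisfies the \emph{first-order} equation $w_t-\Delta w$-type relation, hence is controlled using the two-dimensional Hardy inequality with the logarithmic weight $d(x)^{-2}$ (Theorem 1.1 in \cite{Ikehata2001}): on an exterior planar domain $\|v/d\|_{L^2}\lesssim\|\nabla v\|_{L^2}$. This converts the missing control of $\|\nabla u\|_{L^2}^2+\|\Delta u\|_{L^2}^2$ into controlled quantities involving $\|d(\cdot)(u_1-\Delta u_0)\|_{L^2}$ and the dissipation, closing the estimate after a Gronwall-type absorption.

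I would then assemble $I_2[u_0,u_1]$ explicitly as the resulting combination $E[u](0)+C\big(\|d(\cdot)(u_1-\Delta u_0)\|_{L^2}^2+\|u_1\|_{L^2}^2+\|\nabla u_0\|_{L^2}^2+\|\Delta u_0\|_{L^2}^2\big)$, which is the content of the forthcoming \eqref{est38}, and the lemma follows. The main obstacle is precisely the planar nature of the problem: in $\mathbb{R}^2$ the ordinary Hardy inequality fails, so the whole decay mechanism hinges on threading the logarithmic weight $d(x)$ through the multiplier computation — keeping track of which terms genuinely require the weighted integrability of $u_1-\Delta u_0$ and verifying that the boundary integrals created by the weighted multipliers on $\partial\Omega$ either vanish or have a favorable sign. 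A secondary technical point is justifying these manipulations for merely $H^2\times H^1$ data, which I would handle by a density/approximation argument using smooth compactly-perturbed data and passing to the limit in the final inequality.
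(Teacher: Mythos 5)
Your proposal follows essentially the same route as the paper: the multipliers $u_t$, $\Delta u_t$ and $\Delta u$, the monotonicity of $E[u]$, the weight $(1+t)$, and the time-integrability of the classical energy imported from Theorem 1.1 of \cite{Ikehata2001}, which is exactly where the hypothesis $\|d(\cdot)(u_1-\Delta u_0)\|_{L^2}<\infty$ enters. The only minor difference is that the paper bounds $\int_0^t\|\Delta u(s,\cdot)\|_{L^2}^2\,\dd s$ elementarily from the $\Delta u$ multiplier plus Young's inequality and the already-derived identities, so the Hardy-type logarithmic weight is needed only for $\int_0^t\left(\|u_t\|_{L^2}^2+\|\nabla u\|_{L^2}^2\right)\dd s$ and not for the higher-order piece as you suggest.
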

\begin{proof}
First of all, multiplying the equation in \eqref{eq1} by $u_t$ and integrating the resulting identity over $\Omega$, one gets
\begin{align*}
\frac{1}{2}\frac{\dd}{\dd t}\int_\Omega \left(|u_{t}(t,x)|^2+|\nabla u(t,x)|^2\right)\dd x+\int_\Omega |\nabla u_{t}(t,x)|^2\,\dd x=0.
\end{align*}
To construct the energy, let us integrate the above equation over $[0,t]$ such that
\begin{align}\label{est1}
\frac{1}{2}\left(\|u_{t}(t,\cdot)\|_{L^2}^2+\|\nabla u(t,\cdot)\|_{L^2}^2\right)+\int_0^t\|\nabla u_{t}(s,\cdot)\|_{L^2}^2\,\dd s=\frac{1}{2}\left(\|u_1\|^2_{L^2}+\|\nabla u_0\|^2_{L^2}\right).
\end{align} 

With the aim of deriving higher-order energy, we multiply the equation in \eqref{eq1} by $\Delta u_t$ and integrate over $\Omega$ to have
\begin{align}\label{EQQQ05}
\frac{1}{2}\frac{\dd}{\dd t}\int_\Omega\left( |\nabla u_t(t,x)|^2+ |\Delta u(t,x)|^2\right)\dd x+\int_\Omega |\Delta u_{t}(t,x)|^2\,\dd x=0.
\end{align}
Integrating the resulting equation over $[0,t]$ leads to
\begin{equation}\label{est2}
\frac{1}{2}\left(\|\nabla u_t(t,\cdot)\|_{L^2}^2+\|\Delta u(t,\cdot)\|_{L^2}^2\right)+\int_0^t\|\Delta u_{t}(s,\cdot)\|^2_{L^2}\,\dd s=\frac{1}{2}\left(\|\nabla u_1\|_{L^2}^2+\|\Delta u_0\|_{L^2}^2\right).
\end{equation} 

Similarly as the previous two steps, we multiply the equation in \eqref{eq1} by $\Delta u$ and integrate over $\Omega$ as well as the interval $[0,t]$. It implies
\begin{align*}
-\frac{\dd}{\dd t}\int_\Omega \nabla u_{t}(t,x)\cdot\nabla u(t,x)\,\dd x+\int_\Omega |\nabla u_t(t,x)|^2\,\dd x -\int_\Omega |\Delta u(t,x)|^2\,\dd x-\frac{1}{2}\frac{\dd}{\dd t}\int_\Omega |\Delta u(t,x)|^2\,\dd x=0
\end{align*}
and then,
\begin{align*}
\int_0^t\int_\Omega |\Delta u(s,x)|^2\,\dd x\,\dd s+\frac{1}{2}\int_\Omega |\Delta u(t,x)|^2\,\dd x&=\int_0^t\int_\Omega |\nabla u_t(s,x)|^2\,\dd x\,\dd s-\int_\Omega \nabla u_{t}(t,x)\cdot \nabla u(t,x)\,\dd x\\
&\quad+\frac{1}{2}\int_\Omega |\Delta u_0(x)|^2\,\dd x+\int_\Omega \nabla u_{1}(x)\cdot \nabla u_0(x)\,\dd x.
\end{align*}
By employing Young's inequality, we may conclude that
\begin{align}\label{est3}
\int_0^t\|\Delta u(s,\cdot)\|_{L^2}^2\,\dd s+\frac{1}{2}\|\Delta u(t,\cdot)\|_{L^2}^2&\leqslant \int_0^t\|\nabla u_t(s,\cdot)\|_{L^2}^2\,\dd s+\frac{1}{2}\|\nabla u_{t}(t,\cdot)\|^2_{L^2}+\frac{1}{2}\|\nabla u(t,\cdot)\|^2_{L^2}\notag\\
&\quad+\frac{1}{2}\|\Delta u_0\|^2_{L^2}+\frac{1}{2}\|\nabla u_{1}\|^2_{L^2}+\frac{1}{2}\|\nabla u_0\|^2_{L^2}\notag\\
&\leqslant \|\Delta u_0\|^2_{L^2}+\|\nabla u_{1}\|^2_{L^2}+\|\nabla u_0\|^2_{L^2}+\frac{1}{2}\| u_{1}\|^2_{L^2},
\end{align}
where we have used the equations \eqref{est1} and \eqref{est2}.

According to Theorem 1.1 in \cite{Ikehata2001}, we know under our assumption of initial data in Lemma \ref{Lemma.Linear}, the integration with respect to time variable of classical energy is bounded such that
\begin{align}\label{est5}
\int_0^t \left(\|u_t(s,\cdot)\|_{L^2}^2+\|\nabla u(s,\cdot)\|_{L^2}^2\right)\dd s\leqslant I_0[u_0,u_1],
\end{align}
where the constant $I_0[u_0,u_1]$ is denoted by
\begin{align*}
I_0[u_0,u_1]\doteq 2\|u_0\|^2_{L^2}+\|u_1\|_{L^2}^2+\frac{1}{2}\|\nabla u_0\|_{L^2}^2+3C_0\left\|d(\cdot)(u_1-\Delta u_0)\right\|_{L^2}^2,
\end{align*}
with Hardy's constant $C_0>0$. On the other hand, from \eqref{est1} and \eqref{est3} one observes
\begin{equation}\label{est6}
\int_0^t \left(\|\nabla u_t(s,\cdot)\|_{L^2}^2+\|\Delta u(s,\cdot)\|_{L^2}^2\right)\mathrm{d}s\leqslant I_1[u_0,u_1],
\end{equation} 
where the constant $I_1[u_0,u_1]$ is defined by
\begin{align*}
I_1[u_0,u_1]\doteq \|\Delta u_0\|^2_{L^2}+\|\nabla u_{1}\|^2_{L^2}+\frac{3}{2}\|\nabla u_0\|^2_{L^2}+\| u_{1}\|^2_{L^2}.
\end{align*}
Furthermore, we notice from \eqref{EQQQ05} that
\begin{align*}
&\frac{\dd}{\dd t}\left((1+t)\left(\|\nabla u_t(t,\cdot)\|_{L^2}^2+\|\Delta u(t,\cdot)\|_{L^2}^2\right)\right)\\
&=\|\nabla u_t(t,\cdot)\|_{L^2}^2+\|\Delta u(t,\cdot)\|_{L^2}^2+(1+t)\frac{\dd}{\dd t}\left(\|\nabla u_t(t,\cdot)\|_{L^2}^2+\|\Delta u(t,\cdot)\|_{L^2}^2\right)\\
&\leqslant \|\nabla u_t(t,\cdot)\|_{L^2}^2+\|\Delta u(t,\cdot)\|_{L^2}^2.
\end{align*}
The next decay estimate for higher-order energy yields immediately by integrating the above derived inequality over $[0,t]$: 
\begin{align}\label{EQQQ06}
\|\nabla u_t(t,\cdot)\|_{L^2}^2+\|\Delta u(t,\cdot)\|_{L^2}^2\leqslant (1+t)^{-1}\left(I_1[u_0,u_1]+\|\nabla u_1\|_{L^2}^2+\|\Delta u_0\|_{L^2}^2\right).
\end{align}
Here, we applied \eqref{est6}. The estimate \eqref{EQQQ06} will be used later.

From the derived estimates \eqref{est1} as well as \eqref{est2}, we see
\begin{align*}
\frac{\dd}{\dd t}E[u](t)=-\left(\|\nabla u_t(t,\cdot)\|_{L^2}^2+\|\Delta u_t(t,\cdot)\|_{L^2}^2\right)\leqslant 0,
\end{align*}
which implies
\begin{align*}
\frac{\dd}{\dd t}\left((1+t)E[u](t)\right)=E[u](t)+(1+t)\frac{\dd}{\dd t}E[u](t)\leqslant E[u](t).
\end{align*}
We integrate the above inequality over $[0,t]$ to have
\begin{align}\label{est4}
(1+t)E[u](t)\leqslant \int_0^t E[u](s)\,\dd s+E[u](0).
\end{align}

Finally, taking the next definition of the constant with respect to initial data: 
\begin{align}\label{est38}
I_2[u_0,u_1]\doteq\frac{1}{2}\left(I_0[u_0,u_1]+I_1[u_0,u_1]+\|u_1\|_{L^2}^2+\|\nabla u_0\|^2_{L^2}+\|\nabla u_1\|_{L^2}^2+\|\Delta u_0\|_{L^2}^2\right)
\end{align}
and applying \eqref{est5} and \eqref{est6}, it follows from \eqref{est4} that
\begin{align*}
(1+t)E[u](t)\leqslant I_2[u_0,u_1].
\end{align*}
Thus, the proof is complete.
%
\end{proof}
 
 To end this section, we have to mention that the estimate from \cite{Ikehata2001} holds
 \begin{align}\label{Solution.Itself}
 \|u(t,\cdot)\|_{L^2}^2\leqslant C I_2[u_0,u_1]
 \end{align}
 if the assumptions mentioned Lemma \ref{Lemma.Linear} are satisfied.

\section{Weighted estimates for nonlinear strongly damped wave equation}\label{Sec.Weighted.Nonlinear}
\setcounter{equation}{0}
Throughout this section, our motivation is to derive some exponentially weighted estimates for the next nonlinear problem:
\begin{equation}\label{nonhomo}
\begin{cases}
u_{tt}-\Delta u -\Delta u_t =F(u), &x\in \Omega,\,t>0,\\
u(0,x)=  u_0(x),\,u_t(0,x)=  u_1(x),&x\in \Omega,\\
u=0,&x\in \partial\Omega,\,t>0.
\end{cases}
\end{equation}
We assume that $\ee^{\psi(t,\cdot)}F(u)(t,\cdot)\in L^2(\Omega)$, where the weight function $\psi$ was introduced \eqref{Weighted.Fun}.

In order to derive exponentially weighted estimates in the $L^2$ norm, we introduce some lemmas, which plays an important role in the future.

\begin{lem} The weighted function $\psi$ fulfills two inequalities as follows:
	\begin{align}\label{est7}
	\frac{\varepsilon(2+\rho)+6}{\varepsilon(2+\rho)-2}|\nabla\psi(t,x)|^2-|\psi_t(t,x)|^2\leqslant0
	\end{align} 
	for any $t>0$ and $\rho>\rho_0$, and
	\begin{align}\label{est13}
	\frac{|\nabla\psi(t,x)|^2}{-\psi_t(t,x)}\leqslant \frac{2}{2+\rho}
	\end{align} 
	for any $t>0$ and $\rho> 0$.
\end{lem}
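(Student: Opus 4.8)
The plan is to verify both inequalities by direct substitution of the explicit formula \eqref{Weighted.Fun} for $\psi$, reducing each claim to an elementary inequality in the scalar quantity $(1+t)$ (and in $\rho$, $\varepsilon$). First I would record the basic derivatives: writing $r=|x|$ and $\tau=1+t$, one has
\begin{align*}
\psi_t(t,x)&=-\frac{1}{\tau^{\rho+1}}-\frac{(2+\rho)r^2}{2\tau^{\rho+3}}=-\frac{1}{\tau^{\rho+1}}\left(1+\frac{(2+\rho)r^2}{2\tau^{2}}\right),\\
\nabla\psi(t,x)&=\frac{x}{\tau^{\rho+2}},\qquad |\nabla\psi(t,x)|^2=\frac{r^2}{\tau^{2\rho+4}}.
\end{align*}
It is convenient to set $s\doteq \dfrac{(2+\rho)r^2}{2\tau^2}\geqslant 0$, so that $-\psi_t=\tau^{-(\rho+1)}(1+s)$ and $|\nabla\psi|^2=\dfrac{2s}{(2+\rho)\,\tau^{2\rho+4}}$.

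For \eqref{est13} the computation is immediate:
\[
\frac{|\nabla\psi|^2}{-\psi_t}=\frac{2s}{(2+\rho)\tau^{2\rho+4}}\cdot\frac{\tau^{\rho+1}}{1+s}=\frac{2}{2+\rho}\cdot\frac{s}{(1+s)\,\tau^{\rho+3}}\leqslant\frac{2}{2+\rho},
\]
since $\tfrac{s}{1+s}<1$ and $\tau^{\rho+3}\geqslant 1$ for $t>0$, $\rho>0$. This needs no restriction beyond $\rho>0$.

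For \eqref{est7} I would substitute to get
\[
|\psi_t|^2-\frac{\varepsilon(2+\rho)+6}{\varepsilon(2+\rho)-2}\,|\nabla\psi|^2
=\frac{(1+s)^2}{\tau^{2\rho+2}}-\frac{\varepsilon(2+\rho)+6}{\varepsilon(2+\rho)-2}\cdot\frac{2s}{(2+\rho)\tau^{2\rho+4}},
\]
and, after multiplying by $\tau^{2\rho+4}>0$, it suffices to show
\[
\tau^{2}(1+s)^2\;\geqslant\;\frac{2}{2+\rho}\cdot\frac{\varepsilon(2+\rho)+6}{\varepsilon(2+\rho)-2}\,s .
\]
Here I would use $\tau^2\geqslant 1$ together with the quadratic bound $(1+s)^2\geqslant 4s$ (AM--GM), so it is enough that
\[
4\;\geqslant\;\frac{2}{2+\rho}\cdot\frac{\varepsilon(2+\rho)+6}{\varepsilon(2+\rho)-2},
\qquad\text{i.e.}\qquad 2(2+\rho)\bigl(\varepsilon(2+\rho)-2\bigr)\;\geqslant\;\varepsilon(2+\rho)+6 .
\]
Writing $a\doteq\varepsilon(2+\rho)$, this last inequality reads $2(2+\rho)(a-2)\geqslant a+6$, which is linear in $\rho$ once $a$ is expressed back in terms of $\varepsilon(2+\rho)$; plugging $a=\varepsilon(2+\rho)$ and using the lower bound \eqref{epsilon}, namely $\varepsilon(2+\rho)\geqslant \dfrac{4\rho+14}{2\rho+3}$, turns it into a polynomial inequality in $\rho$ which, after clearing denominators, is exactly the statement that $2\rho^2+3\rho-8\geqslant 0$, i.e. $\rho\geqslant\rho_0$. (One should check the denominator $\varepsilon(2+\rho)-2$ is positive: from $\varepsilon(2+\rho)\geqslant\frac{4\rho+14}{2\rho+3}>2$ for $\rho>0$, it is.) Thus \eqref{est7} holds precisely for $\rho>\rho_0$, which explains the choice of $\rho_0$.

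The only mildly delicate point is the bookkeeping in the last step: one must track that the two simplifications ($\tau^2\geqslant 1$ and $(1+s)^2\geqslant 4s$) are not lossy enough to destroy the inequality, i.e. that the worst case is genuinely $\tau=1$, $s=1$, and that at that point the condition reduces to the defining quadratic $2\rho_0^2+3\rho_0-8=0$ through the extremal choice $\varepsilon(2+\rho)=\frac{4\rho+14}{2\rho+3}$ in \eqref{epsilon}. Everything else is a routine substitution; no fixed-point or PDE argument is involved here, only algebra with the explicit weight.
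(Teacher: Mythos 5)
Your proof is correct and follows essentially the same route as the paper: both verify \eqref{est13} by discarding the $(1+t)^{-(1+\rho)}$ part of $-\psi_t$ and comparing powers of $1+t$, and both reduce \eqref{est7} by direct substitution to the single algebraic condition $\frac{\varepsilon(2+\rho)+6}{\varepsilon(2+\rho)-2}\leqslant 2(2+\rho)$ --- the paper via the discriminant of a quadratic in $X=|x|/(1+t)$, you via the equivalent AM--GM bound $(1+s)^2\geqslant 4s$ --- which is exactly the lower bound on $\varepsilon$ in \eqref{epsilon}, the role of $\rho>\rho_0$ being only to make that bound compatible with $\varepsilon<1$. The one slip is the power of $\tau$ in $|\nabla\psi(t,x)|^2=\frac{2s}{(2+\rho)\tau^{2\rho+2}}$ (you wrote $\tau^{2\rho+4}$), but this is harmless: with the correct exponent the factor $\tau^{2}$ disappears from the reduced inequality and your argument goes through unchanged.
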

\begin{proof}
	We now begin to prove \eqref{est7}. By simple calculations, we get
	\begin{align*}
		&\frac{\varepsilon(2+\rho)+6}{\varepsilon(2+\rho)-2}|\nabla\psi(t,x)|^2-|\psi_t(t,x)|^2\\
		&=\left(\sqrt{\frac{\varepsilon(2+\rho)+6}{\varepsilon(2+\rho)-2}}\frac{|x|}{(1+t)^{2+\rho}}\right)^2-\left(\frac{2+\rho}{2}\frac{|x|^2}{(1+t)^{3+\rho}}+\frac{1}{(1+t)^{1+\rho}}\right)^2\\
		&=\frac{1}{(1+t)^{2+2\rho}}\left(\sqrt{\frac{\varepsilon(2+\rho)+6}{\varepsilon(2+\rho)-2}}X-\frac{2+\rho}{2}X^2-1\right)\left(\sqrt{\frac{\varepsilon(2+\rho)+6}{\varepsilon(2+\rho)-2}}X+\frac{2+\rho}{2}X^2+1\right)\leqslant 0,
	\end{align*}
	where $X=|x|/(1+t)$, because the discriminant of the first polynomial is 
	\begin{align*}
	\triangle =\frac{\varepsilon(2+\rho)+6}{\varepsilon(2+\rho)-2}-2(2+\rho)\leqslant 0.
	\end{align*}
	In the above, we have used the condition \eqref{epsilon} that
	\begin{align*}
	\frac{2}{2+\rho}<\left(2+\frac{8}{2\rho+3}\right)\frac{1}{2+\rho}=\frac{4\rho+14}{(2+\rho)(2\rho+3)}\leqslant \varepsilon.
	\end{align*}
	 Next, the desired estimate \eqref{est13} can be proved by the following way:
	\begin{align*}
		-\psi_t(t,x)=\frac{2+\rho}{2}\frac{|x|^2}{(1+t)^{3+\rho}}+\frac{1}{(1+t)^{1+\rho}}\geqslant\frac{2+\rho}{2}\frac{|x|^2}{(1+t)^{4+2\rho}}=\frac{2+\rho}{2}|\nabla\psi(t,x)|^2.
	\end{align*}
	So, the proof is completed
\end{proof}

\begin{lem} Let $u$ be a regular solution of \eqref{nonhomo}. Then, under the condition (\ref{epsilon}), the following estimate holds:
	\begin{align}\label{est9}
	\frac{|\nabla\psi(t,x)|^2}{-\psi_t(t,x)}\ee^{2\psi(t,x)}|u_{tt}(t,x)|^2&\leqslant -\psi_t(t,x)\ee^{2\psi(t,x)}|\Delta u(t,x)|^2+\varepsilon \,\ee^{2\psi(t,x)}|\Delta u_t(t,x)|^2\notag\\
	&\quad+C_{\varepsilon,\rho}\ee^{2\psi(t,x)}|F(u)(t,x)|^2,
	\end{align} 
	for all $t>0$ and $\rho>\rho_0$. Here, the positive constant $C_{\varepsilon,\rho}$ will be determined in \eqref{CoefficientCvr}.
\end{lem}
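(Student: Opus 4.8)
The plan is to eliminate the second time derivative through the equation and then balance the resulting quadratic by a weighted Cauchy--Schwarz inequality, with the weights tuned so that the two pointwise bounds \eqref{est7} and \eqref{est13} convert the prefactor $|\nabla\psi|^2/(-\psi_t)$ into exactly the coefficients $-\psi_t$ and $\varepsilon$ on the right-hand side of \eqref{est9}. Since \eqref{est9} is a purely pointwise identity in $(t,x)$, no integration over $\Omega$ is involved.

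First I would use \eqref{nonhomo} to write $u_{tt}=\Delta u+\Delta u_t+F(u)$ and invoke the elementary inequality
\begin{align*}
(a+b+c)^2\leqslant\mu_1a^2+\mu_2b^2+\mu_3c^2,\qquad a,b,c\in\mathbb{R},
\end{align*}
valid for every positive $\mu_1,\mu_2,\mu_3$ with $\mu_1^{-1}+\mu_2^{-1}+\mu_3^{-1}=1$ (Cauchy--Schwarz applied to the vectors $(\sqrt{\mu_i})_i$ and $(a_i/\sqrt{\mu_i})_i$). Applying this with $a=\Delta u$, $b=\Delta u_t$, $c=F(u)$ and multiplying by $\ee^{2\psi}|\nabla\psi|^2/(-\psi_t)$, the claim reduces to choosing the $\mu_i$ so that, pointwise,
\begin{align*}
\mu_1\,\frac{|\nabla\psi|^2}{-\psi_t}\leqslant-\psi_t,\qquad
\mu_2\,\frac{|\nabla\psi|^2}{-\psi_t}\leqslant\varepsilon,\qquad
\mu_3\,\frac{|\nabla\psi|^2}{-\psi_t}\leqslant C_{\varepsilon,\rho}.
\end{align*}
The first requirement is met by the constant $\mu_1=\frac{\varepsilon(2+\rho)+6}{\varepsilon(2+\rho)-2}$: indeed \eqref{est7} gives $\mu_1|\nabla\psi|^2\leqslant|\psi_t|^2$, and dividing by $-\psi_t>0$ yields the bound. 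The second is met by $\mu_2=\varepsilon(2+\rho)/2$, since \eqref{est13} is equivalent to $\frac{2+\rho}{2}|\nabla\psi|^2\leqslant-\psi_t$. With $\mu_1,\mu_2$ fixed one then sets $\mu_3=(1-\mu_1^{-1}-\mu_2^{-1})^{-1}$ and $C_{\varepsilon,\rho}\doteq\frac{2}{2+\rho}\,\mu_3$, the third bound again following from \eqref{est13}.

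The only genuine point to verify — and the one place where condition \eqref{epsilon} is used — is that this $\mu_3$ is a well-defined positive number, i.e. $\mu_1^{-1}+\mu_2^{-1}<1$. Writing $s=\varepsilon(2+\rho)$, one computes $\mu_1^{-1}+\mu_2^{-1}=\frac{s-2}{s+6}+\frac{2}{s}=\frac{s^2+12}{s^2+6s}$, hence $1-\mu_1^{-1}-\mu_2^{-1}=\frac{6(s-2)}{s(s+6)}$, which is strictly positive precisely because \eqref{epsilon} forces $s=\varepsilon(2+\rho)\geqslant\frac{4\rho+14}{2\rho+3}=2+\frac{8}{2\rho+3}>2$ (the same inequality also makes $\mu_1$ and the fraction in \eqref{est7} well-defined). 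Consequently $\mu_3=\frac{s(s+6)}{6(s-2)}$ and
\begin{align}\label{CoefficientCvr}
C_{\varepsilon,\rho}\doteq\frac{\varepsilon\bigl(\varepsilon(2+\rho)+6\bigr)}{3\bigl(\varepsilon(2+\rho)-2\bigr)},
\end{align}
which completes the argument. I do not expect any serious obstacle beyond this bookkeeping, since the two weighted inequalities \eqref{est7} and \eqref{est13} are already in hand; the point of the lemma is simply to record how they combine with the equation to control the "dangerous'' term $u_{tt}$ against $\Delta u$, $\Delta u_t$ and the source $F(u)$.
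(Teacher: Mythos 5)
Your proof is correct and follows essentially the same route as the paper: substitute $u_{tt}=\Delta u+\Delta u_t+F(u)$ from the equation, bound the square of the sum by a weighted sum of squares, and use \eqref{est7} and \eqref{est13} to turn the prefactor $|\nabla\psi|^2/(-\psi_t)$ into the coefficients $-\psi_t$, $\varepsilon$ and $C_{\varepsilon,\rho}$. The only difference is cosmetic: the paper expands the square and applies Young's inequality pairwise with the single parameter $\varepsilon_1=(\varepsilon(2+\rho)-2)/8$, arriving at $C_{\varepsilon,\rho}=\frac{2}{2+\rho}\bigl(1+\frac{\varepsilon(2+\rho)-2}{4}+\frac{4}{\varepsilon(2+\rho)-2}\bigr)$, whereas your three-weight Cauchy--Schwarz splitting gives a slightly smaller admissible constant -- immaterial, since the lemma only requires some positive $C_{\varepsilon,\rho}$.
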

\begin{proof}
Taking the consideration on \eqref{nonhomo}, we may expand the quadratic term by
\begin{align*}
|u_{tt}(t,x)|^2&=\left| \Delta u(t,x)+ \Delta u_t(t,x)+F(u)(t,x)\right|^2\\
&=\left| \Delta u(t,x)+\Delta u_t(t,x)\right|^2+|F(u)(t,x)|^2+2\Delta u(t,x)F(u)(t,x)+2\Delta u_t(t,x)F(u)(t,x).
\end{align*}
Later, we will employ Young's inequality such that
\begin{align}\label{YoungIneq}
ab\leqslant \frac{1}{4\varepsilon_1} a^2+\varepsilon_1b^2
\end{align}
with $\varepsilon_1\doteq(\varepsilon(2+\rho)-2)/8>0$ (here we used the assumption \eqref{epsilon}).\\
Plugging $a=| \Delta u(t,x)|$ and $b=| \Delta u_t(t,x)|$ into the above Young's inequality, one immediately has
\begin{align*}
\left| \Delta u(t,x)+ \Delta u_t(t,x)\right|^2&=| \Delta u(t,x)|^2+ |\Delta u_t(t,x)|^2+2\Delta u(t,x)\Delta u_t(t,x)\\
&\leqslant\left(1+\frac{1}{2\varepsilon_1}\right)|\Delta u(t,x)|^2+(1+2\varepsilon_1)|\Delta u_t(t,x)|^2,
\end{align*}
which implies that
\begin{align*}
|u_{tt}(t,x)|^2&\leqslant \left(1+\frac{1}{2\varepsilon_1}\right)|\Delta u(t,x)|^2+(1+2\varepsilon_1)|\Delta u_t(t,x)|^2+|F(u)(t,x)|^2\\
&\quad+2\Delta u(t,x)F(u)(t,x)+2\Delta u_t(t,x)F(u)(t,x).
\end{align*}
On the other hand, by employing Young's inequality \eqref{YoungIneq} again, we can compute
\begin{align*}
&2\Delta u(t,x)F(u)(t,x)+2\Delta u_t(t,x)F(u)(t,x)\\
&\leqslant \frac{1}{2\varepsilon_1}|\Delta u(t,x)|^2+2\varepsilon_1|\Delta u_t(t,x)|^2+\left(2\varepsilon_1+\frac{1}{2\varepsilon_1}\right)|F(u)(t,x)|^2.
\end{align*}
Hence,
\begin{align*}
|u_{tt}(t,x)|^2\leqslant \left(1+\frac{1}{\varepsilon_1}\right)|\Delta u(t,x)|^2+\left(1+4\varepsilon_1\right)|\Delta u_t(t,x)|^2+ \left(1+2\varepsilon_1+\frac{1}{2\varepsilon_1}\right)|F(u)(t,x)|^2.
\end{align*}

Finally, we may deduce that
\begin{align*}
	\frac{|\nabla\psi(t,x)|^2}{-\psi_t(t,x)}\ee^{2\psi(t,x)}|u_{tt}(t,x)|^2&\leqslant\left(\frac{\varepsilon(2+\rho)+6}{\varepsilon(2+\rho)-2}\right)\frac{|\nabla\psi(t,x)|^2}{-\psi_t(t,x)}\ee^{2\psi(t,x)}|\Delta u(t,x)|^2\\
	&\quad+\varepsilon\left(1+\frac{\rho}{2}\right)\frac{|\nabla\psi(t,x)|^2}{-\psi_t(t,x)}\ee^{2\psi(t,x)}|\Delta u_t(t,x)|^2+C_{\varepsilon,\rho}\ee^{2\psi(t,x)}|F(u)(t,x)|^2\\
	&\leqslant -\psi_t(t,x)\ee^{2\psi(t,x)}|\Delta u(t,x)|^2+\varepsilon\,\ee^{2\psi(t,x)}|\Delta u_t(t,x)|^2\\
	&\quad+C_{\varepsilon,\rho}e^{2\psi(t,x)}|F(u)(t,x)|^2,
\end{align*}
where we have used \eqref{est7} as well as \eqref{est13}, and the fact that $1+4\varepsilon_1=\varepsilon(1+\rho/2)$. In the above inequality, we denote the positive constant
\begin{align}\label{CoefficientCvr}
C_{\varepsilon,\rho}\doteq\frac{2}{2+\rho}\left(1+\frac{\varepsilon(2+\rho)-2}{4}+\frac{4}{\varepsilon(2+\rho)-2}\right)>0.
\end{align}
Thus, we complete the proof of this lemma.
\end{proof}

By using our derived lemmas, we may prove the next propositions for weighted estimates for higher-order energy to \eqref{nonhomo}. At this time, we restrict ourselves $\rho>\rho_0$ to control the higher-order term in the energy estimate.
\begin{prop}\label{prop1} Let $u$ be a regular solution of \eqref{nonhomo}. Then, we have the estimate
	\begin{align}\label{est10}
	&\left\|\ee^{\psi(t,\cdot)}\nabla u_t(t,\cdot)\right\|_{L^2}^2+\left\|\ee^{\psi(t,\cdot)}\Delta u(t,\cdot)\right\|_{L^2}^2+ (1-\varepsilon)\int_0^t\left\|\ee^{\psi(s,\cdot)}\Delta u_t(s,\cdot)\right\|_{L^2}^2\dd s\notag\\
	&\leqslant \left\|\ee^{\psi(0,\cdot)}\nabla u_1\right\|_{L^2}^2+\left\|\ee^{\psi(0,\cdot)}\Delta u_0\right\|_{L^2}^2+\widetilde{C}_{\varepsilon,\rho}\int_0^t\left\|\ee^{\psi(s,\cdot)}F(u)(s,\cdot)\right\|^2_{L^2}\dd s
	\end{align} 
	for all $t>0$ and $\rho>\rho_0$. Here, the positive constant $\widetilde{C}_{\varepsilon,\rho}$ will be shown in \eqref{EQQQ02}.
\end{prop}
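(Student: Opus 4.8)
The plan is to carry out a weighted energy identity for \eqref{nonhomo} with the higher-order multiplier $\ee^{2\psi(t,x)}\Delta u_t$, exactly as in the unweighted computation \eqref{EQQQ05}, and then to absorb the extra terms that the weight $\ee^{2\psi}$ creates. First I would multiply the equation in \eqref{nonhomo} by $\ee^{2\psi}\Delta u_t$ and integrate over $\Omega$. Apart from the source term, this produces $-\tfrac12\tfrac{\dd}{\dd t}\big(\|\ee^{\psi}\nabla u_t\|_{L^2}^2+\|\ee^{\psi}\Delta u\|_{L^2}^2\big)$, the dissipation $-\|\ee^{\psi}\Delta u_t\|_{L^2}^2$, the good term $\int_\Omega\psi_t\,\ee^{2\psi}\big(|\nabla u_t|^2+|\Delta u|^2\big)\dd x\leqslant0$ (recall $\psi_t<0$), and one commutator term. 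Indeed, the spatial integration by parts in $\int_\Omega\ee^{2\psi}\Delta u_t\,u_{tt}\,\dd x$ carries no boundary contribution, because $u=0$ on $\partial\Omega$ forces $u_{tt}=0$ on $\partial\Omega$, and gives
\[
\int_\Omega\ee^{2\psi}\Delta u_t\,u_{tt}\,\dd x=-\tfrac12\tfrac{\dd}{\dd t}\|\ee^{\psi}\nabla u_t\|_{L^2}^2+\int_\Omega\psi_t\ee^{2\psi}|\nabla u_t|^2\,\dd x-2\int_\Omega\ee^{2\psi}(\nabla\psi\cdot\nabla u_t)\,u_{tt}\,\dd x .
\]
Collecting everything yields
\[
\tfrac12\tfrac{\dd}{\dd t}\Big(\|\ee^{\psi}\nabla u_t\|_{L^2}^2+\|\ee^{\psi}\Delta u\|_{L^2}^2\Big)+\|\ee^{\psi}\Delta u_t\|_{L^2}^2=\int_\Omega\psi_t\ee^{2\psi}\big(|\nabla u_t|^2+|\Delta u|^2\big)\dd x-2\int_\Omega\ee^{2\psi}(\nabla\psi\cdot\nabla u_t)u_{tt}\,\dd x-\int_\Omega\ee^{2\psi}\Delta u_t\,F(u)\,\dd x,
\]
and the genuinely dangerous object is the cross term, which brings the second time derivative $u_{tt}$ into play.

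Next I would tame this cross term. By Cauchy--Schwarz and the elementary bound $2\alpha\beta\leqslant(-\psi_t)\alpha^2+\tfrac{1}{-\psi_t}\beta^2$ (valid since $-\psi_t>0$), applied with $\alpha=|\nabla u_t|$ and $\beta=|\nabla\psi|\,|u_{tt}|$, one obtains $-2\int_\Omega\ee^{2\psi}(\nabla\psi\cdot\nabla u_t)u_{tt}\,\dd x\leqslant\int_\Omega(-\psi_t)\ee^{2\psi}|\nabla u_t|^2\,\dd x+\int_\Omega\tfrac{|\nabla\psi|^2}{-\psi_t}\ee^{2\psi}|u_{tt}|^2\,\dd x$. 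The first summand cancels exactly the $|\nabla u_t|^2$-part of the good term $\int_\Omega\psi_t\ee^{2\psi}|\nabla u_t|^2\,\dd x$, and the remaining quantity $\int_\Omega\tfrac{|\nabla\psi|^2}{-\psi_t}\ee^{2\psi}|u_{tt}|^2\,\dd x$ is precisely the one controlled by \eqref{est9}: it is bounded by $\int_\Omega(-\psi_t)\ee^{2\psi}|\Delta u|^2\,\dd x+\varepsilon\|\ee^{\psi}\Delta u_t\|_{L^2}^2+C_{\varepsilon,\rho}\|\ee^{\psi}F(u)\|_{L^2}^2$, whose first term in turn cancels the $|\Delta u|^2$-part of $\int_\Omega\psi_t\ee^{2\psi}|\Delta u|^2\,\dd x$. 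After these two cancellations the identity becomes $\tfrac12\tfrac{\dd}{\dd t}\big(\|\ee^{\psi}\nabla u_t\|_{L^2}^2+\|\ee^{\psi}\Delta u\|_{L^2}^2\big)+(1-\varepsilon)\|\ee^{\psi}\Delta u_t\|_{L^2}^2\leqslant C_{\varepsilon,\rho}\|\ee^{\psi}F(u)\|_{L^2}^2-\int_\Omega\ee^{2\psi}\Delta u_t\,F(u)\,\dd x$.

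Finally I would estimate the source term by Young's inequality, keeping a factor $\tfrac{1-\varepsilon}{2}$ on $\|\ee^{\psi}\Delta u_t\|_{L^2}^2$ so that it is absorbed on the left and producing $\tfrac{1}{2(1-\varepsilon)}\|\ee^{\psi}F(u)\|_{L^2}^2$ on the right; multiplying the resulting differential inequality by $2$ and integrating over $[0,t]$ gives \eqref{est10} with a constant of the form $\widetilde{C}_{\varepsilon,\rho}=2C_{\varepsilon,\rho}+\tfrac{1}{1-\varepsilon}$. The step I expect to be the crux is the handling of $u_{tt}$: the weight forces it into the computation through $\nabla(\ee^{2\psi}u_{tt})$, it cannot be integrated away, and it is exactly what makes this second (higher-order) weighted estimate harder than the first one. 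The whole chain of cancellations is available only because $\psi$ is chosen as in \eqref{Weighted.Fun} with $\rho>\rho_0$ and $\varepsilon$ as in \eqref{epsilon}, which is precisely the hypothesis under which \eqref{est9} holds; one also uses, implicitly, that the regular solution $u$ decays fast enough that all the weighted integrals are finite and differentiation under the integral sign is legitimate, which is the role of the qualifier ``regular solution''.
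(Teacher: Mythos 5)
Your proposal is correct and follows essentially the same route as the paper: multiply by $\ee^{2\psi}\Delta u_t$, integrate by parts (the boundary term vanishing since $u=0$ on $\partial\Omega$ gives $u_{tt}=0$ there), absorb the cross term $-2\int_\Omega\ee^{2\psi}(\nabla\psi\cdot\nabla u_t)u_{tt}\,\dd x$ against $\int_\Omega\psi_t\ee^{2\psi}|\nabla u_t|^2\,\dd x$ via the weighted Cauchy--Schwarz inequality (the paper writes this as an exact completion of the square), control the resulting $\int_\Omega\tfrac{|\nabla\psi|^2}{-\psi_t}\ee^{2\psi}|u_{tt}|^2\,\dd x$ by \eqref{est9}, and finish with Young's inequality on the source term. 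The only differences are cosmetic (you integrate over $\Omega$ first whereas the paper works with a pointwise divergence identity, and your explicit constant $2C_{\varepsilon,\rho}+\tfrac{1}{1-\varepsilon}$ is the honest factor-of-two bookkeeping that the paper's statement of \eqref{est10} glosses over).
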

\begin{proof}
Firstly, we multiply \eqref{nonhomo} by $e^{2\psi(t,x)}\Delta u_{t}$ to have
\begin{align}\label{est11}
&\ee^{2\psi(t,x)}u_{tt}(t,x)\Delta u_t(t,x)-\ee^{2\psi(t,x)}\Delta u(t,x) \Delta u_t(t,x)-\ee^{2\psi(t,x)}|\Delta u_t(t,x)|^2\notag\\
&=\ee^{2\psi(t,x)}F(u)(t,x)\Delta u_t(t,x).
\end{align} 
Due to the computations that
\begin{align*}
	\ee^{2\psi(t,x)}u_{tt}(t,x)\Delta u_t(t,x)&=\ee^{2\psi(x,t)}\divv(u_{tt}(t,x)\nabla u_t(t,x))- \ee^{2\psi(t,x)}(\nabla u_t(t,x))_t\cdot\nabla u_t(t,x)\\
	&=\divv\left(\ee^{2\psi(t,x)}u_{tt}(t,x)\nabla u_t(t,x)\right)-2e^{2\psi(t,x)}u_{tt}(t,x)\nabla\psi(t,x)\cdot\nabla u_t(t,x)\\
	&\quad- \frac{1}{2}\frac{\dd}{\dd t}\left(\ee^{2\psi(t,x)}|\nabla u_t(t,x)|^2\right)+\ee^{2\psi(t,x)}\psi_t(t,x)|\nabla u_t(t,x)|^2,
\end{align*}
and
\begin{align*}
\ee^{2\psi(t,x)}\Delta u(t,x)\Delta u_t(t,x)=\frac{1}{2}\frac{\dd}{\dd t}\left(\ee^{2\psi(t,x)}|\Delta u(t,x)|^2\right)- \ee^{2\psi(t,x)}\psi_t(t,x)|\Delta u(t,x)|^2,
\end{align*}
it follows from \eqref{est11} that
\begin{align}\label{est12}
&\frac{\dd}{\dd t}\left(\frac{\ee^{2\psi(t,x)}}{2}\left(|\nabla u_t(t,x)|^2+|\Delta u(t,x)|^2\right)\right)-\divv\left(\ee^{2\psi(t,x)}u_{tt}(t,x)\nabla u_t(t,x)\right)\nonumber\\
&+2\ee^{2\psi(t,x)}u_{tt}(t,x)\nabla\psi(t,x)\cdot\nabla u_t(t,x)-\ee^{2\psi(t,x)}\psi_t(t,x)|\nabla u_t(t,x)|^2\nonumber\\
&+\ee^{2\psi(t,x)}(-\psi_t(t,x))|\Delta u(t,x)|^2+\ee^{2\psi(t,x)}|\Delta u_t(t,x)|^2\notag\\
&=-\ee^{2\psi(t,x)}F(u)(t,x)\Delta u_t(t,x).
\end{align}
Clearly,
\begin{align*}
&2e^{2\psi(t,x)}u_{tt}(t,x)\nabla\psi(t,x)\cdot\nabla u_t(t,x)-\ee^{2\psi(t,x)}\psi_t(t,x)|\nabla u_t(t,x)|^2\\
&=\frac{\ee^{2\psi(t,x)}}{-\psi_t(t,x)}\left|\psi_t(t,x)\nabla u_t(t,x)-\nabla\psi(t,x) u_{tt}(t,x)\right|^2+\frac{|\nabla\psi(t,x)|^2}{\psi_t(t,x)}\ee^{2\psi(t,x)}|u_{tt}(t,x)|^2.
\end{align*}
It leads that  \eqref{est12} can be written by
\begin{align}\label{EQQQ01}
	&\frac{\dd}{\dd t}\left(\frac{\ee^{2\psi(t,x)}}{2}\left(|\nabla u_t(t,x)|^2+|\Delta u(t,x)|^2\right)\right)-\divv\left(\ee^{2\psi(t,x)}u_{tt}(t,x)\nabla u_t(t,x)\right)\notag\\
	&+\frac{\ee^{2\psi(t,x)}}{-\psi_t(t,x)}\left|\psi_t(t,x)\nabla u_t(t,x)-\nabla\psi(t,x) u_{tt}(t,x)\right|^2\notag\\
	&+\ee^{2\psi(t,x)}(-\psi_t(t,x))|\Delta u(t,x)|^2+\ee^{2\psi(t,x)}|\Delta u_t(t,x)|^2\notag\\
	&\leqslant\frac{|\nabla\psi(t,x)|^2}{-\psi_t(t,x)}\ee^{2\psi(t,x)}|u_{tt}(t,x)|^2+\ee^{2\psi(t,x)}|F(u)(t,x)||\Delta u_t(t,x)|\notag\\
	&\leqslant \ee^{2\psi(t,x)}(-\psi_t(t,x))|\Delta u(t,x)|^2+\varepsilon \,\ee^{2\psi(t,x)}|\Delta u_t(t,x)|^2\notag\\
	&\quad+C_{\varepsilon,\rho}\ee^{2\psi(t,x)}|F(u)(t,x)|^2+\ee^{2\psi(t,x)}|F(u)(t,x)||\Delta u_t(t,x)|,
\end{align}
where we have used \eqref{est9} in the last step of the above estimate. Then, the above inequality \eqref{EQQQ01} can be simplified as follows:
\begin{align*}
	&\frac{\dd}{\dd t}\left(\frac{\ee^{2\psi(t,x)}}{2}\left(|\nabla u_t(t,x)|^2+|\Delta u(t,x)|^2\right)\right)-\divv\left(\ee^{2\psi(t,x)}u_{tt}(t,x)\nabla u_t(t,x)\right)\\
	&+\frac{\ee^{2\psi(t,x)}}{-\psi_t(t,x)}\left|\psi_t(t,x)\nabla u_t(t,x)-\nabla\psi(t,x) u_{tt}(t,x)\right|^2+(1-\varepsilon)\ee^{2\psi(t,x)}|\Delta u_t(t,x)|^2\\
	&\leqslant C_{\varepsilon,\rho}\ee^{2\psi(t,x)}|F(u)(t,x)|^2+\ee^{2\psi(t,x)}|F(u)(t,x)||\Delta u_t(t,x)|.
\end{align*}
The application of Young's inequality yields
\begin{align*}
\ee^{2\psi(t,x)}|F(u)(t,x)||\Delta u_t(t,x)|\leqslant \frac{1-\varepsilon}{2}e^{2\psi(t,x)}|\Delta u_t(t,x)|^2+ \frac{1}{2(1-\varepsilon)}\ee^{2\psi(t,x)}|F(u)(t,x)|^2,
\end{align*}
since our setting of the constant that $\varepsilon<1$. This inequality shows that
\begin{align}\label{EQQQ02}
	&\frac{\dd}{\dd t}\left(\frac{\ee^{2\psi(t,x)}}{2}\left(|\nabla u_t(t,x)|^2+|\Delta u(t,x)|^2\right)\right)-\divv\left(\ee^{2\psi(t,x)}u_{tt}(t,x)\nabla u_t(t,x)\right)\notag\\
&+\frac{\ee^{2\psi(t,x)}}{-\psi_t(t,x)}\left|\psi_t(t,x)\nabla u_t(t,x)-\nabla\psi(t,x) u_{tt}(t,x)\right|^2+\frac{1-\varepsilon}{2}\ee^{2\psi(t,x)}|\Delta u_t(t,x)|^2\notag\\
	&\leqslant\left(C_{\varepsilon,\rho}+\frac{1}{2(1-\varepsilon)}\right) \ee^{2\psi(t,x)}|F(u)(t,x)|^2\doteq\widetilde{C}_{\varepsilon,\rho}\ee^{2\psi(t,x)}|F(u)(t,x)|^2.
\end{align}
Consequently, integrating the above inequality over $\Omega\times[0,t]$ and using the boundary condition and the fact that $\psi_t<0$ we may derive our desired result.
\end{proof}

Next, the lower-order weighted energy can be estimated by the next lemma. We should emphasize that the next lemma is hold for all $\rho>0$.

\begin{prop}\label{prop2} Let $u$ be a regular solution of \eqref{nonhomo}. Then, we have the estimate
	\begin{align}\label{est14}
	&\left\|\ee^{\psi(t,\cdot)}u_t(t,\cdot)\right\|_{L^2}^2+\left\|\ee^{\psi(t,\cdot)}\nabla u(t,\cdot)\right\|_{L^2}^2\notag\\
	&\leqslant \left\|\ee^{\psi(0,\cdot)} u_1\right\|_{L^2}^2+\left\|\ee^{\psi(0,\cdot)}\nabla u_0\right\|_{L^2}^2+2\int_0^t\left\|\ee^{\psi(s,\cdot)}F(u)(s,\cdot)\right\|_{L^2}\left\|\ee^{\psi(s,\cdot)}u_t(s,\cdot)\right\|_{L^2}\dd s
	\end{align} 
	for all $t>0$ and $\rho>0$.
\end{prop}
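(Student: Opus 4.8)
The plan is to imitate the proof of Proposition \ref{prop1}, but with the first-order multiplier $\ee^{2\psi(t,x)}u_t$ in place of $\ee^{2\psi(t,x)}\Delta u_t$; since we are now one order lower, the algebra is lighter and no higher-order restriction on $\rho$ is needed. First I would multiply the equation in \eqref{nonhomo} by $\ee^{2\psi(t,x)}u_t(t,x)$, integrate over $\Omega$, and rewrite the three terms on the left. For the $u_{tt}$-term I would use $\partial_t(\ee^{2\psi}|u_t|^2)=2\ee^{2\psi}u_tu_{tt}+2\psi_t\ee^{2\psi}|u_t|^2$; for the two Laplacian terms I would integrate by parts in $x$, noting that differentiating the Dirichlet condition $u=0$ on $\partial\Omega$ in time gives $u_t=0$ on $\partial\Omega$, so every boundary contribution (each of which carries a factor $u_t$) vanishes. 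This produces the energy identity
\begin{align*}
&\frac{1}{2}\frac{\dd}{\dd t}\int_\Omega\ee^{2\psi}\left(|u_t|^2+|\nabla u|^2\right)\dd x+\int_\Omega(-\psi_t)\ee^{2\psi}\left(|u_t|^2+|\nabla u|^2\right)\dd x+\int_\Omega\ee^{2\psi}|\nabla u_t|^2\,\dd x\\
&\quad+\int_\Omega 2\ee^{2\psi}u_t\,\nabla\psi\cdot\nabla u\,\dd x+\int_\Omega 2\ee^{2\psi}u_t\,\nabla\psi\cdot\nabla u_t\,\dd x=\int_\Omega\ee^{2\psi}F(u)\,u_t\,\dd x,
\end{align*}
where all $\psi$-arguments are $(t,x)$ and $-\psi_t>0$.

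The crux is to show that the four ``extra'' terms on the left-hand side (the two dissipative terms carrying $-\psi_t$, the term with $|\nabla u_t|^2$, and the two $\nabla\psi$ cross terms) sum to a nonnegative quantity and may therefore be dropped. For the $\nabla u_t$ cross term I would complete the square,
\begin{align*}
\int_\Omega\ee^{2\psi}|\nabla u_t|^2\,\dd x+\int_\Omega 2\ee^{2\psi}u_t\,\nabla\psi\cdot\nabla u_t\,\dd x=\int_\Omega\ee^{2\psi}\left|\nabla u_t+u_t\nabla\psi\right|^2\dd x-\int_\Omega\ee^{2\psi}|\nabla\psi|^2|u_t|^2\,\dd x,
\end{align*}
and for the $\nabla u$ cross term I would apply Young's inequality in the form $\left|2\ee^{2\psi}u_t\,\nabla\psi\cdot\nabla u\right|\leqslant(-\psi_t)\ee^{2\psi}|\nabla u|^2+\dfrac{|\nabla\psi|^2}{-\psi_t}\ee^{2\psi}|u_t|^2$, so that the entire $(-\psi_t)|\nabla u|^2$ dissipation is consumed. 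Discarding the nonnegative square, the sum of the extra terms is bounded below by
\begin{align*}
\int_\Omega\left[(-\psi_t)-|\nabla\psi|^2-\frac{|\nabla\psi|^2}{-\psi_t}\right]\ee^{2\psi}|u_t|^2\,\dd x,
\end{align*}
and the bracket is $\geqslant0$: dividing \eqref{est15} by $-\psi_t>0$ and rearranging gives exactly $\dfrac{|\nabla\psi|^2}{-\psi_t}+|\nabla\psi|^2\leqslant-\psi_t$. This is the only genuine property of $\psi$ used here, and it is valid for every $\rho>0$, in agreement with the statement.

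Once the nonnegative remainder is discarded, the identity collapses to $\frac{1}{2}\frac{\dd}{\dd t}\left(\|\ee^{\psi}u_t\|_{L^2}^2+\|\ee^{\psi}\nabla u\|_{L^2}^2\right)\leqslant\int_\Omega\ee^{2\psi}F(u)u_t\,\dd x$, and the Cauchy--Schwarz inequality bounds the right-hand side by $\|\ee^{\psi}F(u)\|_{L^2}\|\ee^{\psi}u_t\|_{L^2}$. Integrating over $[0,t]$ and multiplying by $2$ then yields precisely \eqref{est14}. The only delicate point is the bookkeeping in the previous paragraph that forces the weight-derivative cross terms to be absorbed into the dissipative terms via \eqref{est15}; apart from that, one must justify the integrations by parts and the differentiation under the integral sign, which is why the estimate is first established for regular solutions and afterwards extended to the energy class by a standard approximation argument.
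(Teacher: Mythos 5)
Your proof is correct and follows essentially the same route as the paper: multiply by $\ee^{2\psi}u_t$, absorb the two $\nabla\psi$ cross terms into the dissipative terms (the paper completes both squares exactly, which after discarding the nonnegative squares leaves precisely your bracket $(-\psi_t)-|\nabla\psi|^2-|\nabla\psi|^2/(-\psi_t)\geqslant0$, i.e.\ \eqref{est15}), then integrate and apply the Cauchy--Schwarz/H\"older inequality. No substantive difference.
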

\begin{proof}
Similarly to the proof of Lemma 2.1 in \cite{Ikehata-Inoue2008}, we multiply \eqref{nonhomo} by $e^{2\psi(t,x)}u_{t}$ and get
\begin{align*}
\ee^{2\psi(t,x)}u_{tt}(t,x) u_t(t,x)-\ee^{2\psi(t,x)}\Delta u(t,x) u_t(t,x)-\ee^{2\psi(t,x)}\Delta u_t(t,x) u_t(t,x)=e^{2\psi(t,x)} F(u)(t,x) u_t(t,x).
\end{align*}
Hence, the equality can be deduced as follows:
\begin{align*}
	&\frac{\dd}{\dd t}\left(\frac{e^{2\psi(t,x)}}{2}\left(|u_t(t,x)|^2+|\nabla u(t,x)|^2\right)\right)-\divv\left(\ee^{2\psi(t,x)}u_{t}(t,x)\nabla(u(t,x)+ u_t(t,x))\right)\\
	&+\frac{\ee^{2\psi(t,x)}}{\psi_t(t,x)}|u_t(t,x)|^2\left(|\nabla \psi(t,x)|^2-\psi_t(t,x)|\nabla \psi(t,x)|^2-|\psi_t(t,x)|^2\right)\\
	&+\frac{\ee^{2\psi(t,x)}}{-\psi_t(t,x)}\left|\psi_t(t,x)\nabla u(t,x)-u_t(t,x)\nabla\psi(t,x)\right|^2+\ee^{2\psi(t,x)}\left|\nabla u_t(t,x)+u_t(t,x)\nabla\psi(t,x)\right|^2\\
	&=\ee^{2\psi(t,x)}F(u)(t,x) u_t(t,x).
\end{align*}
Using \eqref{est15} and the property $\psi_t<0$, we claim that
\begin{align}\label{EQQQ03}
&\frac{\dd}{\dd t}\left(\frac{\ee^{2\psi(t,x)}}{2}\left(|u_t(t,x)|^2+|\nabla u(t,x)|^2\right)\right)-\divv\left(\ee^{2\psi(t,x)}u_{t}(t,x)\nabla(u(t,x)+ u_t(t,x))\right)\notag\\
&\leqslant \ee^{2\psi(t,x)}F(u)(t,x) u_t(t,x).
\end{align}
Let us integrate \eqref{EQQQ03} over $\Omega\times[0,t]$ to obtain
\begin{align*}
	&\frac{1}{2}\left\|\ee^{\psi(t,\cdot)}u_t(t,\cdot)\right\|_{L^2}^2+\frac{1}{2}\left\|\ee^{\psi(t,\cdot)}\nabla u(t,\cdot)\right\|_{L^2}^2\\
	&\leqslant \frac{1}{2}\left\|\ee^{\psi(0,\cdot)} u_1\right\|_{L^2}^2+\frac{1}{2}\left\|\ee^{\psi(0,\cdot)}\nabla u_0\right\|_{L^2}^2+\int_0^t\left\|\ee^{2\psi(s,\cdot)}F(u)(s,\cdot)u_t(s,\cdot)\right\|_{L^1}\dd s.
\end{align*} 
The proof can be completed after using H\"older's inequality in the above inequality.
\end{proof}

\section{Proof of Theorem \ref{Thm.GESDS}}\label{Sec.Proof.Thm}
\setcounter{equation}{0}
Before proving our main theorem, let us denote by $E_0(t,x)$ and $E_1(t,x)$ the fundamental solutions to the linear problem \eqref{eq1} with initial data $(u_0,u_1)=(\delta_0,0)$ and $(u_0,u_1)=(0,\delta_0)$, respectively. Here, $\delta_0$ is the Dirac distribution in $x=0$ with respect to spatial variables. Therefore, the solution $u^{\lin}=u^{\lin}(t,x)$ to the exterior problem \eqref{eq1} is given by
\begin{align*}
u^{\lin}(t,x)=E_0(t,x)\ast_{(x)}u_0(x)+E_1(t,x)\ast_{(x)}u_1(x).
\end{align*}

Let us define an evolution space
\begin{align*}
X(T)\doteq\ml{C}\left([0,T],H^2(\Omega)\cap H^1_0(\Omega)\right)\cap\ml{C}^1\left([0,T],H^1(\Omega)\right)
\end{align*}
carrying the corresponding norm
\begin{align*}
M[u](T)\doteq\sup\limits_{t\in[0,T]}W[u](t)\doteq\sup\limits_{t\in[0,T]}\left(\left\|\ee^{\psi(t,\cdot)}\ml{D}u(t,\cdot)\right\|_{L^2}^2+(1+t)\left\|\ml{D}u(t,\cdot)\right\|^2_{L^2}+\left\|u(t,\cdot)\right\|^2_{L^2}\right),
\end{align*}
where the space-time differential operator is denoted by $\ml{D}\doteq\left(\partial_t,\nabla,\nabla\partial_t,\Delta\right)$.

According to Duhamel's principle, we introduce the operator
\begin{align*}
N:\, u\in X(T)\longrightarrow Nu&\doteq u^{\lin}(t,x)+u^{\non}(t,x)\\
&\doteq u^{\lin}(t,x)+\int_0^tE_1(t-s,x)\ast_{(x)}F(u)(s,x)\,\dd s,
\end{align*}
where we choose $F(u)(t,x)\doteq|u(t,x)|^p+|u_t(t,x)|^q$ in this section. Furthermore, let us define 
\begin{align*}
J_0[u_0,u_1]\doteq I_2[u_0,u_1]+I_{\mathrm{exp}}[u_0,u_1],
\end{align*}
where $I_{\mathrm{exp}}[u_0,u_1]$ has been defined in \eqref{Weighted.Data}. We should remark that if $\ml{J}[u_0,u_1]\leqslant\varepsilon_0$, then it is trivial that $J_0[u_0,u_1]\leqslant C\varepsilon_0$ for some constant $C>0$.

We will prove as the global in time solution to \eqref{Eq.Semi.BVP} the fixed points of operator $N$. In other words, our first aim is to derive
\begin{align}
M[Nu](T)\leqslant \widetilde{C}_0J_0[u_0,u_1]+\widetilde{C}_1\left(\sum\limits_{r=p,q,(p+1)/2,(q+1)/2}M[u](T)^r+\widetilde{M}[u](T;p,q)\right)\label{Important.01}
\end{align}
with positive constants $\widetilde{C}_0$ and $\widetilde{C}_1$.
Here, we denote
\begin{align*}
\widetilde{M}[u](t;p,q)\doteq I_{\mathrm{exp}}[u_0,u_1]^{\frac{q-1}{q}}+M[u](t)^{\frac{p(q-1)}{q}}+M[u](t)^{q-1}.
\end{align*}
Moreover, to guarantee uniqueness of global (in time) small data solution, our second aim is to prove the next estimates:
\begin{align}\label{Important.02}
M[Nu-Nv]\leqslant \widetilde{C}_2 M[u-v],
\end{align}
for any $u,v\in X(T)$, with a positive constant $\widetilde{C}_2$.

From Lemma \ref{Lemma.Linear} and \eqref{Solution.Itself}, it is sufficient for us to show
\begin{align*}
\|u^{\lin}(t,\cdot)\|_{L^2}^2+(1+t)\|\ml{D}u^{\lin}(t,\cdot)\|_{L^2}^2\leqslant CI_2[u_0,u_1].
\end{align*}
Furthermore, the association of Lemmas \ref{prop1} and \ref{prop2} shows that
\begin{align*}
\left\|\ee^{\psi(t,\cdot)}\ml{D}u^{\lin}(t,\cdot)\right\|_{L^2}^2\leqslant C I_{\mathrm{exp}}[u_0,u_1].
\end{align*}
Together with them, we claim that $u^{\lin}\in X(T)$. 

Therefore, the next part of this section is to prove $u^{\non}\in X(T)$. To estimate the nonlinear term in the weighted space, we introduce some lemmas, which will be used later.
\color{black}
\begin{lem}\label{lemma1} Let $p,q>4+\rho$ for all $\rho>0$. Then, the next estimate holds:
	\begin{align}\label{est16}
	\int_0^t\left\|\ee^{\psi(s,\cdot)}F(u)(s,\cdot)\right\|_{L^2}^2\dd s \leqslant C\left(M[u](t)^p+M[u](t)^q\right)
	\end{align}
	for all $t>0$.
\end{lem}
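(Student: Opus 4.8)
The plan is to estimate the two pieces of $F(u) = |u|^p + |u_t|^q$ separately by reducing each to the weighted $L^2$ control already encoded in $M[u](t)$, via a weighted Gagliardo–Nirenberg inequality of the type used in \cite{Ikehata-Inoue2008} (their Lemma 2.3). First I would write $\ee^{\psi}|u|^p = (\ee^{\psi/p}|u|)^p$ and observe that bounding $\|\ee^{\psi}|u|^p\|_{L^2}^2 = \|\ee^{\psi/p}u\|_{L^{2p}}^{2p}$ requires an $L^{2p}$ bound on the weighted function $v \doteq \ee^{\psi/p}u$. In two space dimensions the relevant interpolation is $\|v\|_{L^{2p}} \lesssim \|v\|_{L^2}^{1-\theta}\|\nabla v\|_{L^2}^{\theta}$ with $\theta = 1 - 1/p \in (0,1)$ (valid for all finite exponents when $n=2$), so that $\|\ee^{\psi/p}u\|_{L^{2p}}^{2p} \lesssim \|\ee^{\psi/p}u\|_{L^2}^{2}\,\|\nabla(\ee^{\psi/p}u)\|_{L^2}^{2p-2}$. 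Since $\nabla(\ee^{\psi/p}u) = \ee^{\psi/p}(\nabla u + p^{-1}u\nabla\psi)$ and $\ee^{\psi/p} \le \ee^{\psi}$ while $|\nabla\psi|$ is bounded (indeed $|\nabla\psi(t,x)| = |x|/(1+t)^{2+\rho}$, which together with the elementary bound $\ee^{\psi/p}|x| \lesssim \ee^{\psi}$ coming from the quadratic term in $\psi$ keeps things controlled), both $\|\ee^{\psi/p}u\|_{L^2}$ and $\|\nabla(\ee^{\psi/p}u)\|_{L^2}$ are dominated by $W[u](t)^{1/2} \le M[u](t)^{1/2}$, modulo the weighted gradient terms $\|\ee^{\psi}\nabla u\|_{L^2}$ and $\|\ee^{\psi}u\|_{L^2}$, all of which sit inside $\|\ee^{\psi(t,\cdot)}\mathcal D u(t,\cdot)\|_{L^2}^2$.

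Collecting powers, the $|u|^p$ contribution to the integrand is bounded by $C\,M[u](t)^{1}\cdot M[u](t)^{p-1} = C\,M[u](t)^p$ \emph{pointwise in $s$}; but to integrate in $s$ over $[0,t]$ without picking up a factor of $t$ I would instead peel off the extra decay: write $\|\ee^{\psi}|u|^p\|_{L^2}^2 \lesssim \big(\|\ee^{\psi}u\|_{L^2}^2\big)^{1/p}\big(\|\ee^{\psi}\nabla u\|_{L^2}^2 + \cdots\big)^{1-1/p}\cdot\big((1+s)\|\mathcal D u\|_{L^2}^2\big)^{?}(1+s)^{-?}$, using the polynomial decay estimate $(1+s)\|\mathcal D u(s,\cdot)\|_{L^2}^2 \le M[u](s)$ to extract a factor $(1+s)^{-\alpha}$ with $\alpha>1$; this is precisely where the hypothesis $p > 4+\rho$ enters, ensuring the time exponent is integrable. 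The same argument applies verbatim to $|u_t|^q$, replacing $u$ by $u_t$ and $\nabla u$ by $\nabla u_t$, all of which are again components of $\mathcal D u$; here the condition $q > 4+\rho$ plays the analogous role. Summing the two contributions gives the claimed bound $C(M[u](t)^p + M[u](t)^q)$.

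The main obstacle I anticipate is the bookkeeping of the weighted Gagliardo–Nirenberg step: one must verify that differentiating the weight $\ee^{\psi/p}$ produces only terms already controlled by $W[u]$ (the $u\nabla\psi$ term is the delicate one, since $|\nabla\psi|$ is not bounded uniformly in $x$ — it grows linearly in $|x|$ — so one genuinely needs the quadratic growth of $\psi$ itself to absorb it, i.e. an inequality of the form $|x|\,\ee^{\psi(t,x)/p} \le C(1+t)^{1+\rho/2}\ee^{\psi(t,x)}$ or similar), and then that the split of powers of $M[u]$ between $W[u]$ (no decay) and $(1+s)\|\mathcal D u\|_{L^2}^2$ (decay rate $(1+s)^{-1}$) yields a net time weight $(1+s)^{-\alpha}$ with $\alpha>1$ under the stated lower bounds on $p$ and $q$. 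Everything else — Hölder, interpolation, integration of a power of $(1+s)$ — is routine. I would organize the write-up by first recording the weighted interpolation inequality as a displayed estimate, then doing $|u|^p$ in full detail, then remarking that $|u_t|^q$ is identical.
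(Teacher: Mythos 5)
Your overall strategy (split $F(u)$ into the $|u|^p$ and $|u_t|^q$ pieces, convert $\left\|\ee^{\psi}|u|^p\right\|_{L^2}^2$ into $\left\|\ee^{\psi/p}u\right\|_{L^{2p}}^{2p}$, apply a weighted Gagliardo--Nirenberg inequality, and peel off an integrable power of $(1+s)$ using the decaying part of $M[u]$) is the same as the paper's. But there is a genuine gap in the interpolation step. You apply the ordinary two-dimensional Gagliardo--Nirenberg inequality to $v=\ee^{\psi/p}u$, which forces you to control $\left\|\ee^{\psi/p}u\right\|_{L^2}$ and, after differentiating the weight, the commutator term $\left\|\ee^{\psi/p}u\,\nabla\psi\right\|_{L^2}$. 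Both are weighted $L^2$ norms of $u$ \emph{itself}, and neither is contained in $M[u]$: the operator $\ml{D}=(\partial_t,\nabla,\nabla\partial_t,\Delta)$ has no zeroth-order component, so $\left\|\ee^{\psi}\ml{D}u\right\|_{L^2}$ controls $\left\|\ee^{\psi}\nabla u\right\|_{L^2}$ but not $\left\|\ee^{\psi}u\right\|_{L^2}$, and the only zeroth-order quantity in $M[u]$ is the \emph{unweighted} $\|u\|_{L^2}^2$. Your assertion that these terms ``all sit inside $\|\ee^{\psi}\ml{D}u\|_{L^2}^2$'' is therefore false as stated, and without it the estimate collapses. (Your auxiliary inequality $|x|\,\ee^{\psi/p}\leqslant C(1+t)^{1+\rho/2}\ee^{\psi}$ is true, but it only converts the commutator term back into a weighted norm of $u$ itself, so it does not rescue the argument.)

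The paper avoids this by invoking the specific weighted interpolation inequality of Ikehata--Inoue, restated as Lemma \ref{Lem.G-N}, namely $\left\|\ee^{\sigma\psi(t,\cdot)}v\right\|_{L^q}\leqslant C(1+t)^{(2+\rho)(1-\theta(q))/2}\|\nabla v\|_{L^2}^{1-\sigma}\left\|\ee^{\psi(t,\cdot)}\nabla v\right\|_{L^2}^{\sigma}$, whose right-hand side contains \emph{only gradient norms} --- exactly the quantities available in $W[u]$ --- at the price of the polynomial factor $(1+t)^{(2+\rho)(1-\theta(q))/2}$. That polynomial loss is then precisely what produces the hypothesis $p,q>4+\rho$: with $\sigma=1/p$ and $\theta(2p)=1-1/p$, using $\|\nabla u(s,\cdot)\|_{L^2}^2\leqslant(1+s)^{-1}W[u](s)$ one finds a net time exponent governed by $\beta_1=(4+\rho+\eta)/p-1$, which is negative exactly when $p>4+\rho+\eta$. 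Your write-up leaves this exponent count as question marks, so the role of the hypothesis is never actually verified. To repair the proof you should quote (or reprove) the gradient-only weighted Gagliardo--Nirenberg inequality and then carry out the exponent computation explicitly, rather than interpolating against $\left\|\ee^{\psi/p}u\right\|_{L^2}$.
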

\begin{proof} To begin with the proof, let us split the estimate into two parts
\begin{align*}
	\int_0^t\left\|\ee^{\psi(s,\cdot)}F(u)(s,\cdot)\right\|_{L^2}^2\dd s&\leqslant 2\int_0^t\left\|\ee^{\psi(s,\cdot)}|u(s,\cdot)|^p\right\|_{L^2}^2\dd s+2\int_0^t\left\|\ee^{\psi(s,\cdot)}|u_t(s,\cdot)|^q\right\|_{L^2}^2\dd s\\
	&\doteq 2\left(A_1(t)+A_2(t)\right).
\end{align*}
Now, we will estimate each part step by step. By using Lemma \ref{Lem.G-N}, i.e. the Gagliardo-Nirenberg type inequality associated with weighted function $\psi$, we get 
\begin{align*}
	A_1(t)&\leqslant C\int_0^t\left((1+s)^{\frac{(2+\rho)(1-\theta(2p))}{2}}\|\nabla u(s,\cdot)\|_{L^2}^{1-\frac{1}{p}}\left\|\ee^{\psi(s,\cdot)}\nabla u(s,\cdot)\right\|_{L^2}^{\frac{1}{p}}\right)^{2p}\dd s\\
	&\leqslant C\int_0^t(1+s)^{-(1+\eta)}\left((1+s)^{(2+\rho)(1-\theta(2p))+\frac{1+\eta}{p}-(1-\frac{1}{p})}W[u](s)\right)^{p}\dd s\\
	&\leqslant C\int_0^t(1+s)^{-(1+\eta)}\,\dd s \left(\sup_{s\in[0,t]}(1+s)^{\beta_1}W[u](s)\right)^{p}\\
	&\leqslant C \left(\sup_{s\in[0,t]}(1+s)^{\beta_1}W[u](s)\right)^{p},
\end{align*}
where 
\begin{align*}
\beta_1\doteq(2+\rho)(1-\theta(2p))+\frac{1+\eta}{p}-\left(1-\frac{1}{p}\right)
\end{align*}
and $\theta(2p)=2(\frac{1}{2}-\frac{1}{2p})$ for all $\eta>0$. Choosing a sufficiently small constant $\eta>0$, we found that
\begin{align*}
\beta_1=(2+\rho)\left(1-2\left(\frac{1}{2}-\frac{1}{2p}\right)\right)+\frac{2}{p}+\frac{\eta}{p}-1=\frac{\eta}{p}-\frac{p-4-\rho}{p}<0,
\end{align*}
since our assumption $p>4+\rho$. Therefore, the estimate for $A_1(t)$ is
\begin{align*}
A_1(t)\leqslant  CM[u](t)^p.
\end{align*}

Similarly as the above, one has
\begin{align*}
	A_2(t)&\leqslant C\int_0^t\left((1+s)^{\frac{(2+\rho)(1-\theta(2q))}{2}}\|\nabla u_t(s,\cdot)\|_{L^2}^{1-\frac{1}{q}}\left\|\ee^{\psi(s,\cdot)}\nabla u_t(s,\cdot)\right\|_{L^2}^{\frac{1}{q}}\right)^{2q}\dd s\\
	&\leqslant  C\int_0^t(1+s)^{-(1+\eta)}\left((1+s)^{(2+\rho)(1-\theta(2q))+\frac{1+\eta}{q}-(1-\frac{1}{q})}W[u](s)\right)^q\dd s\\
	&\leqslant C\int_0^t(1+s)^{-(1+\eta)}\dd s \left(\sup_{s\in[0,t]}(1+s)^{\beta_2}W[u](s)\right)^q\\
	&\leqslant C \left(\sup_{s\in[0,t]}(1+s)^{\beta_2}W[u](s)\right)^q,
\end{align*}
where 
\begin{align*}
\beta_2\doteq(2+\rho)(1-\theta(2q))+\frac{1+\eta}{q}-\left(1-\frac{1}{q}\right)
\end{align*}
and $\theta(2q)=2(\frac{1}{2}-\frac{1}{2q})$ for all $\eta>0$. By choosing a sufficiently small constant $\eta>0$. It is clear that $\beta_2<0$, when $q>4+\rho$. Thus, the second time-dependent function can be estimated by
\begin{align*}
A_2(t)\leqslant  CM[u](t)^q.
\end{align*}
Summarizing the above estimates, we may complete the proof.
\end{proof}

Additionally, by the similar method as the proof of Lemma \ref{lemma1}, we will prove the following result.
\begin{lem}\label{lemma2}
	Let $p,q>5+\rho$ for all $\rho>0$. Then, the next estimate holds:
	\begin{equation}\label{est17}
	\int_0^t\left\|\ee^{\psi(s,\cdot)}F(u)(s,\cdot)\right\|_{L^2}\left\|\ee^{\psi(s,\cdot)}u_t(s,\cdot)\right\|_{L^2}\dd s \leqslant C\left(M[u](t)^{\frac{p+1}{2}}+M[u](t)^{\frac{q+1}{2}}\right)
	\end{equation}
	for all $t>0$.
\end{lem}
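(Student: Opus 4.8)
The plan is to follow the proof of Lemma \ref{lemma1} almost verbatim, the only new feature being the extra factor $\left\|\ee^{\psi(s,\cdot)}u_t(s,\cdot)\right\|_{L^2}$ appearing in the integrand. First I would split the mixed nonlinearity $F(u)=|u|^p+|u_t|^q$, writing
$$\int_0^t\left\|\ee^{\psi(s,\cdot)}F(u)(s,\cdot)\right\|_{L^2}\left\|\ee^{\psi(s,\cdot)}u_t(s,\cdot)\right\|_{L^2}\dd s\leqslant B_1(t)+B_2(t),$$
where $B_1(t)$ collects the contribution of $|u|^p$ and $B_2(t)$ that of $|u_t|^q$, each still multiplied by $\left\|\ee^{\psi(s,\cdot)}u_t(s,\cdot)\right\|_{L^2}$. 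From the definition of $W[u]$ (recall $\ml{D}=(\partial_t,\nabla,\nabla\partial_t,\Delta)$) one has directly $\left\|\ee^{\psi(s,\cdot)}u_t(s,\cdot)\right\|_{L^2}\leqslant W[u](s)^{1/2}$, together with $\|\nabla u(s,\cdot)\|_{L^2}^2\leqslant(1+s)^{-1}W[u](s)$, $\left\|\ee^{\psi(s,\cdot)}\nabla u(s,\cdot)\right\|_{L^2}^2\leqslant W[u](s)$, and the analogous bounds with $u$ replaced by $u_t$.

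For $B_1(t)$ I would write $\ee^{\psi}|u|^p=(\ee^{\psi/p}|u|)^p$ and apply Lemma \ref{Lem.G-N} exactly as for $A_1(t)$, obtaining
$$\left\|\ee^{\psi(s,\cdot)}|u(s,\cdot)|^p\right\|_{L^2}\leqslant C\left((1+s)^{\frac{(2+\rho)(1-\theta(2p))}{2}}\|\nabla u(s,\cdot)\|_{L^2}^{1-\frac1p}\left\|\ee^{\psi(s,\cdot)}\nabla u(s,\cdot)\right\|_{L^2}^{\frac1p}\right)^{p}.$$
Multiplying by $\left\|\ee^{\psi(s,\cdot)}u_t(s,\cdot)\right\|_{L^2}$ and inserting the $W[u]$-bounds above, the integrand of $B_1$ is dominated by $C(1+s)^{\gamma_1}W[u](s)^{\frac{p+1}{2}}$; using $\theta(2p)=2(\tfrac12-\tfrac1{2p})$ one computes $\gamma_1=\frac{3+\rho-p}{2}$, so that, factoring out $(1+s)^{-(1+\eta)}$ with $\eta>0$ small, the residual power of $(1+s)$ attached to $W[u](s)$ is $\beta_1=\frac{5+\rho-p+2\eta}{p+1}$, which is strictly negative because $p>5+\rho$. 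Integrating $(1+s)^{-(1+\eta)}$ over $[0,t]$ and bounding $\sup_{s\in[0,t]}(1+s)^{\beta_1}W[u](s)\leqslant M[u](t)$ gives $B_1(t)\leqslant C\,M[u](t)^{\frac{p+1}{2}}$.

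The term $B_2(t)$ is handled identically after exchanging $u$ for $u_t$ and $p$ for $q$: using $\ee^{\psi}|u_t|^q=(\ee^{\psi/q}|u_t|)^q$, Lemma \ref{Lem.G-N}, and the bounds $\|\nabla u_t(s,\cdot)\|_{L^2}^2\leqslant(1+s)^{-1}W[u](s)$, $\left\|\ee^{\psi(s,\cdot)}\nabla u_t(s,\cdot)\right\|_{L^2}^2\leqslant W[u](s)$, one is led to the exponent $\beta_2=\frac{5+\rho-q+2\eta}{q+1}<0$ since $q>5+\rho$, and hence $B_2(t)\leqslant C\,M[u](t)^{\frac{q+1}{2}}$. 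Adding the two bounds yields \eqref{est17}. I do not anticipate a genuine obstacle here: the argument is mechanical once one notices that the extra factor $\left\|\ee^{\psi}u_t\right\|_{L^2}\leqslant W[u]^{1/2}$ contributes one further half-power of $W[u]$, lowering the exponents from $p,q$ (as in Lemma \ref{lemma1}) to $(p+1)/2,(q+1)/2$ and sharpening the admissible range from $p,q>4+\rho$ to $p,q>5+\rho$; the only point requiring care is the bookkeeping of the powers of $(1+s)$.
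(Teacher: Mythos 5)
Your proposal is correct and follows essentially the same route as the paper: split into the $|u|^p$ and $|u_t|^q$ contributions, bound the extra factor by $\left\|\ee^{\psi(s,\cdot)}u_t(s,\cdot)\right\|_{L^2}\leqslant W[u](s)^{1/2}$, apply the weighted Gagliardo--Nirenberg inequality (Lemma \ref{Lem.G-N}), and check that the net power of $(1+s)$ is integrable precisely when $p,q>5+\rho$. The only difference is cosmetic bookkeeping (you attach the residual exponent to $W[u]^{(p+1)/2}$ as a whole, the paper to $W[u]^{1/2}$ inside the $p$-th power), and your exponents $\gamma_1$ and $\beta_1$ agree with the paper's $\beta_3$ computation.
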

\begin{proof}
	Let us first divide the integral into two parts such that
	\begin{align*}
		&\int_0^t\left\|\ee^{\psi(s,\cdot)}F(u)(s,\cdot)\right\|_{L^2}\left\|\ee^{\psi(s,\cdot)}u_t(s,\cdot)\right\|_{L^2}\dd s\\
		&\leqslant 2\int_0^t\left\|\ee^{\psi(s,\cdot)}|u(s,\cdot)|^p\right\|_{L^2}\left\|\ee^{\psi(s,\cdot)}u_t(s,\cdot)\right\|_{L^2}\dd s+2\int_0^t\left\|\ee^{\psi(s,\cdot)}|u_t(s,\cdot)|^q\right\|_{L^2}\left\|\ee^{\psi(s,\cdot)}u_t(s,\cdot)\right\|_{L^2}\dd s\\
		&\doteq 2\left(B_1(t)+B_2(t)\right).
	\end{align*}
	We apply Lemma \ref{Lem.G-N} again to derive
	\begin{align*}
		B_1(t)&\leqslant C\int_0^t\left((1+s)^{\frac{(2+\rho)(1-\theta(2p))}{2}}\left\|\nabla u(s,\cdot)\right\|_{L^2}^{1-\frac{1}{p}}\left\|\ee^{\psi(s,\cdot)}\nabla u(s,\cdot)\right\|^{\frac{1}{p}}_{L^2}\right)^{p}W[u](s)^{\frac{1}{2}}\,\dd s\\
		&\leqslant C\int_0^t(1+s)^{-(1+\eta)}\left((1+s)^{\frac{(2+\rho)(1-\theta(2p))}{2}+\frac{1+\eta}{p}-\frac{1}{2}(1-\frac{1}{p})}W[u](s)^{\frac{1}{2}}\right)^{p}W[u](s)^{\frac{1}{2}}\,\dd s\\
		&\leqslant C\int_0^t(1+s)^{-(1+\eta)}\,\dd s \left(\sup_{s\in[0,t]}(1+s)^{\beta_3}W[u](s)^{\frac{1}{2}}\right)^{p}M[u](t)^{\frac{1}{2}}\\
		&\leqslant C \left(\sup_{s\in[0,t]}(1+s)^{\beta_3}W[u](s)^{\frac{1}{2}}\right)^{p}M[u](t)^{\frac{1}{2}}
	\end{align*}
	for all $\eta>0$. Here, choosing sufficiently small constant $\eta>0$, we observe that
	\begin{align*}
	\beta_3\doteq\frac{(2+\rho)(1-\theta(2p))}{2}+\frac{1+\eta}{p}-\frac{1}{2}\left(1-\frac{1}{p}\right)=\frac{\eta}{p}-\frac{p-5-\rho}{2p}<0,
	\end{align*}
	where we used our assumption $p>5+\rho$. So, the estimate holds
	\begin{align*}
	B_1(t)\leqslant  CM[u](t)^{\frac{p+1}{2}}.
	\end{align*}

	 Analogously, we may compute
	\begin{align*}
		B_2(t)&\leqslant C\int_0^t\left((1+s)^{\frac{(2+\rho)(1-\theta(2q))}{2}}\|\nabla u_t(s,\cdot)\|_{L^2}^{1-\frac{1}{q}}\left\|\ee^{\psi(s,\cdot)}\nabla u_t(s,\cdot)\right\|_{L^2}^{\frac{1}{q}}\right)^{q}W[u](s)^{\frac{1}{2}}\,\dd s\\
		&\leqslant C\int_0^t(1+s)^{-(1+\eta)}\,\dd s \left(\sup_{s\in[0,t]}(1+s)^{\beta_4}W[u](s)^{\frac{1}{2}}\right)^{q}M[u](t)^{\frac{1}{2}}\\
		&\leqslant C \left(\sup_{s\in[0,t]}(1+s)^{\beta_4}W[u](s)^{\frac{1}{2}}\right)^{q}M[u](t)^{\frac{1}{2}},
	\end{align*}
	for all $\eta>0$. The parameter $\beta_4$ satisfies
	\begin{align*}
	\beta_4\doteq\frac{(2+\rho)(1-\theta(2q))}{2}+\frac{1+\eta}{q}-\frac{1}{2}\left(1-\frac{1}{q}\right)=\frac{\eta}{q}-\frac{q-5-\rho}{2q}<0
	\end{align*}
	by using our assumption $q>5+\rho$ and the choice of sufficiently small constant $\eta>0$. We conclude
	\begin{align*}
	B_2(t)\leqslant  CM[u](t)^{\frac{q+1}{2}}.
	\end{align*}
	This implies the desired estimate.
\end{proof}
The next proposition plays an important role in proving global (in time) existence of small data solution by using Lemmas \ref{lemma1} and \ref{lemma2} directly.
\begin{prop}\label{prop3}
	Let $p,q>5+\rho$ for all $\rho>\rho_0$. The next estimate holds:
	\begin{equation}\label{est18}
	\left\|\ee^{\psi(t,\cdot)}\ml{D}u^{\non}(t,\cdot)\right\|_{L^2}^2 \leqslant C\left(M[u](t)^p+M[u](t)^q+M[u](t)^{\frac{p+1}{2}}+M[u](t)^{\frac{q+1}{2}}\right)
	\end{equation}
	for all $t>0$.
\end{prop}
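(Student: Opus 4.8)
The plan is to observe that $u^{\non}$ is precisely the solution of the inhomogeneous problem \eqref{nonhomo} with source $F(u)=|u|^p+|u_t|^q$ and \emph{vanishing} Cauchy data, $u^{\non}(0,\cdot)=u^{\non}_t(0,\cdot)=0$ (together with the homogeneous Dirichlet condition on $\partial\Omega$), so that Propositions \ref{prop1} and \ref{prop2} apply to $u^{\non}$ directly and all their initial-data contributions drop out. Accordingly I would split $\|\ee^{\psi(t,\cdot)}\ml{D}u^{\non}(t,\cdot)\|_{L^2}^2$ into the higher-order weighted part $\|\ee^{\psi(t,\cdot)}\nabla u^{\non}_t(t,\cdot)\|_{L^2}^2+\|\ee^{\psi(t,\cdot)}\Delta u^{\non}(t,\cdot)\|_{L^2}^2$, to be controlled by Proposition \ref{prop1}, and the lower-order part $\|\ee^{\psi(t,\cdot)}u^{\non}_t(t,\cdot)\|_{L^2}^2+\|\ee^{\psi(t,\cdot)}\nabla u^{\non}(t,\cdot)\|_{L^2}^2$, to be controlled by Proposition \ref{prop2}. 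The regularity required to run these two propositions is obtained in the usual way by approximation, since $F(u)\in\ml{C}([0,T],L^2(\Omega))$ for $u\in X(T)$ by the two-dimensional Sobolev embeddings.

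For the higher-order part I would apply Proposition \ref{prop1} to $u^{\non}$; since $\varepsilon<1$ and the data vanish, dropping the nonnegative space-time integral on the left gives
\[
\left\|\ee^{\psi(t,\cdot)}\nabla u^{\non}_t(t,\cdot)\right\|_{L^2}^2+\left\|\ee^{\psi(t,\cdot)}\Delta u^{\non}(t,\cdot)\right\|_{L^2}^2\leqslant \widetilde{C}_{\varepsilon,\rho}\int_0^t\left\|\ee^{\psi(s,\cdot)}F(u)(s,\cdot)\right\|_{L^2}^2\,\dd s,
\]
and Lemma \ref{lemma1} bounds the right-hand side by $C\bigl(M[u](t)^p+M[u](t)^q\bigr)$. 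This is the one place where the restriction $\rho>\rho_0$ is actually used, through Proposition \ref{prop1}.

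For the lower-order part, Proposition \ref{prop2} applied to $u^{\non}$ yields (again the data drop)
\[
\left\|\ee^{\psi(t,\cdot)}u^{\non}_t(t,\cdot)\right\|_{L^2}^2+\left\|\ee^{\psi(t,\cdot)}\nabla u^{\non}(t,\cdot)\right\|_{L^2}^2\leqslant 2\int_0^t\left\|\ee^{\psi(s,\cdot)}F(u)(s,\cdot)\right\|_{L^2}\left\|\ee^{\psi(s,\cdot)}u^{\non}_t(s,\cdot)\right\|_{L^2}\,\dd s ,
\]
whose right-hand side still contains the unknown; closing this is the main obstacle. I would set $\phi(t):=\|\ee^{\psi(t,\cdot)}u^{\non}_t(t,\cdot)\|_{L^2}^2+\|\ee^{\psi(t,\cdot)}\nabla u^{\non}(t,\cdot)\|_{L^2}^2$, use $\|\ee^{\psi(s,\cdot)}u^{\non}_t(s,\cdot)\|_{L^2}\leqslant\sqrt{\phi(s)}$, and solve the Gronwall-type inequality $\phi(t)\leqslant 2\int_0^t\|\ee^{\psi(s,\cdot)}F(u)(s,\cdot)\|_{L^2}\sqrt{\phi(s)}\,\dd s$ by the standard comparison argument, obtaining $\phi(t)\leqslant\bigl(\int_0^t\|\ee^{\psi(s,\cdot)}F(u)(s,\cdot)\|_{L^2}\,\dd s\bigr)^2$. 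It then remains to bound $\int_0^t\|\ee^{\psi(s,\cdot)}F(u)(s,\cdot)\|_{L^2}\,\dd s$, which I would do by exactly the weighted Gagliardo--Nirenberg computation used in Lemmas \ref{lemma1} and \ref{lemma2} (via Lemma \ref{Lem.G-N}), only now with the $L^1$ norm in time: splitting $F(u)$ into its $|u|^p$ and $|u_t|^q$ parts and using $\|\ml{D}u(s,\cdot)\|_{L^2}^2\leqslant(1+s)^{-1}W[u](s)$ and $\|\ee^{\psi(s,\cdot)}\ml{D}u(s,\cdot)\|_{L^2}^2\leqslant W[u](s)$ (both immediate from the definition of $W[u]$), one is left with the time integrals $\int_0^t(1+s)^{(3+\rho-p)/2}W[u](s)^{p/2}\,\dd s$ and $\int_0^t(1+s)^{(3+\rho-q)/2}W[u](s)^{q/2}\,\dd s$, which converge precisely because $p,q>5+\rho$. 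This gives $\int_0^t\|\ee^{\psi(s,\cdot)}F(u)(s,\cdot)\|_{L^2}\,\dd s\leqslant C\bigl(M[u](t)^{p/2}+M[u](t)^{q/2}\bigr)$, whence $\phi(t)\leqslant C\bigl(M[u](t)^{p/2}+M[u](t)^{q/2}\bigr)^2\leqslant C\bigl(M[u](t)^p+M[u](t)^q\bigr)$ after Young's inequality on the cross term.

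Adding the two estimates produces $\|\ee^{\psi(t,\cdot)}\ml{D}u^{\non}(t,\cdot)\|_{L^2}^2\leqslant C\bigl(M[u](t)^p+M[u](t)^q\bigr)$, which is a fortiori bounded by the right-hand side of \eqref{est18} (the terms $M[u](t)^{(p+1)/2}$ and $M[u](t)^{(q+1)/2}$ being nonnegative). The only genuine difficulty is the self-referential right-hand side of the Proposition \ref{prop2} estimate for $u^{\non}$; it is resolved by the Gronwall step, whose essential ingredient is the $L^1$-in-time bound on $\|\ee^{\psi(s,\cdot)}F(u)(s,\cdot)\|_{L^2}$ --- the natural companion of Lemma \ref{lemma2} and precisely what forces the hypothesis $p,q>5+\rho$. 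All remaining estimates are the decay bookkeeping already carried out in Lemma \ref{Lemma.Linear} and Lemmas \ref{lemma1}--\ref{lemma2}.
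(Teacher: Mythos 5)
Your proposal is correct, and for the higher-order part it coincides with the paper's (essentially unwritten) argument: apply Proposition \ref{prop1} to $u^{\non}$ with vanishing data, drop the nonnegative time integral using $\varepsilon<1$, and invoke Lemma \ref{lemma1}. For the lower-order part you take a genuinely different route. The paper obtains the proposition ``by using Lemmas \ref{lemma1} and \ref{lemma2} directly'': the self-referential integral produced by Proposition \ref{prop2} is handled by Lemma \ref{lemma2}, which bounds $\left\|\ee^{\psi(s,\cdot)}u_t(s,\cdot)\right\|_{L^2}$ by $W[u](s)^{1/2}$ for the argument $u$ of the operator $N$; this is exactly what produces the exponents $(p+1)/2$ and $(q+1)/2$ in \eqref{est18}. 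You instead keep $\left\|\ee^{\psi(s,\cdot)}u^{\non}_t(s,\cdot)\right\|_{L^2}$ as an unknown, close the estimate by the nonlinear Gronwall (Ou-Iang) inequality $\phi(t)\leqslant 2\int_0^t g(s)\sqrt{\phi(s)}\,\dd s\Rightarrow\phi(t)\leqslant\bigl(\int_0^t g(s)\,\dd s\bigr)^2$, and then prove an $L^1$-in-time analogue of Lemma \ref{lemma1}; the decay exponent $(3+\rho-p)/2$ you compute from Lemma \ref{Lem.G-N} is right, and its integrability is precisely the hypothesis $p,q>5+\rho$. Your route buys two things: it yields the sharper bound $C\left(M[u](t)^p+M[u](t)^q\right)$ with no need for the $(p+1)/2$, $(q+1)/2$ terms, and it avoids tacitly identifying $u^{\non}_t$ with $u_t$, which the paper's direct appeal to Lemma \ref{lemma2} relies on and which is only literally valid at a fixed point $u=Nu$. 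The cost is the extra Gronwall step and one additional weighted Gagliardo--Nirenberg computation that the paper does not write down.
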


In conclusion, we has completed the estimate of weighted energy even for higher-order. So, we will estimate
\begin{align*}
\|\ml{D}u^{\non}(t,\cdot)\|^2_{L^2}\quad\mbox{and}\quad \|u^{\non}(t,\cdot)\|^2_{L^2}
\end{align*}
in the following part.

Let us define integral operators
\begin{align*}
u_{N_1}(t,x)\doteq\int_0^tE_1(t-s,x)\ast_{(x)}|u(s,x)|^p\,\dd s\quad\mbox{and}\quad u_{N_2}(t,x)\doteq\int_0^tE_1(t-s,x)\ast_{(x)}|u_t(s,x)|^q\,\dd s,
\end{align*}
which implies
\begin{align*}
u^{\non}(t,x)=u_{N_1}(t,x)+u_{N_2}(t,x).
\end{align*}
Moreover, we define the differential operators
\begin{align*}
\ml{D}_1\doteq(\partial_t,\nabla)\quad\mbox{and}\quad\ml{D}_2\doteq(\partial_t\nabla,\Delta).
\end{align*}
Hence, we notice that $\ml{D}=(\ml{D}_1,\ml{D}_2)$.

From the proof of Theorem 1.1 in \cite{Ikehata-Inoue2008}, under our assumption $p>6+2\rho_0$ it is sufficient to derive
\begin{align}\label{est25}
\|u_{N_1}(t,\cdot)\|_{L^2}^2+(1+t)\|\ml{D}_1u_{N_1}(t,\cdot)\|_{L^2}^2\leqslant CM[u](t)^{p}.
\end{align}

Next, we will estimate $\|\ml{D}_2u_{N_1}(t,\cdot)\|_{L^2}$. The application of \eqref{EQQQ06} indicates that
\begin{align*}
&\left\|\ml{D}_2E_1(t-s,\cdot)\ast_{(\cdot)}|u(s,\cdot)|^p\right\|_{L^2}\notag\\
&\leqslant C(1+t-s)^{-\frac{1}{2}}\left(\left\||u(s,\cdot)|^p\right\|_{L^2}+\left\|\nabla |u(s,\cdot)|^p\right\|_{L^2}\right)\nonumber\\
&\leqslant C(1+t-s)^{-\frac{1}{2}}\left(\|u(s,\cdot)\|^p_{L^{2p}}+\left\||u(s,\cdot)|^{p-1}\nabla u(s,\cdot)\right\|_{L^2}\right)\nonumber\\
&\leqslant C(1+t-s)^{-\frac{1}{2}}\left(\|u(s,\cdot)\|^p_{L^{2p}}+\left\||u(s,\cdot)|^{p-1}\right\|_{L^{2p/(p-1)}}\|\nabla u(s,\cdot)\|_{L^{2p}}\right)\nonumber\\
&=C(1+t-s)^{-\frac{1}{2}}\left(\|u(s,\cdot)\|^p_{L^{2p}}+\|u(s,\cdot)\|^{p-1}_{L^{2p}}\|\nabla u(s,\cdot)\|_{L^{2p}}\right),
\end{align*}
where we have used  H\"older's inequality.\\
Moreover, due to $u(t,\cdot)\in H^2(\Omega)$ for $t\in[0,T]$, the Gagliardo-Nirenberg inequality shows
\begin{align*}
\|u(s,\cdot)\|_{L^{2p}}&\leqslant C\|u(s,\cdot)\|_{L^2}^{1-\theta_1(2p)}\|\nabla u(s,\cdot)\|_{L^2}^{\theta_1(2p)},\\
\|\nabla u(s,\cdot)\|_{L^{2p}}&\leqslant C\|\nabla u(s,\cdot)\|_{L^2}^{1-\theta_1(2p)}\|\Delta u(s,\cdot)\|_{L^2}^{\theta_1(2p)},
\end{align*}
where $\theta_1(2p):=\theta(2p)=2(\frac{1}{2}-\frac{1}{2p})=1-\frac{1}{p}$.\\
By applying the estimates from the definition of solution space $X(T)$ such that
\begin{align}
\|u(s,\cdot)\|_{L^{2p}}&\leqslant C(1+s)^{-\frac{\theta_1(2p)}{2}}W[u](s)^{\frac{1}{2}},\label{est27}\\
\|\nabla u(s,\cdot)\|_{L^{2p}}&\leqslant  C(1+s)^{-\frac{1}{2}}W[u](s)^{\frac{1}{2}},\label{est28}
\end{align}
we derive
\begin{align*}
\left\|\ml{D}_2E_1(t-s,\cdot)\ast_{(\cdot)}|u(s,\cdot)|^p\right\|_{L^2}\leqslant C(1+t-s)^{-\frac{1}{2}}(1+s)^{-\frac{\theta_1(2p)p}{2}}W[u](s)^{\frac{p}{2}}.
\end{align*}
From the definition of $u_{N_1}$, we immediately obtain
\begin{align}\label{est29}
\|\ml{D}_2u_{N_1}(t,\cdot)\|_{L^2}&\leqslant\int_0^t\left\|\ml{D}_2E_1(t-s,\cdot)\ast_{(\cdot)}|u(s,\cdot)|^p\right\|_{L^2}\dd s\nonumber\\
&\leqslant C\int_0^t(1+t-s)^{-\frac{1}{2}}(1+s)^{-\frac{\theta_1(2p)p}{2}}W[u](s)^{\frac{p}{2}}\,\dd s\nonumber\\
&\leqslant CM[u](t)^{\frac{p}{2}}\int_0^t(1+t-s)^{-\frac{1}{2}}(1+s)^{-\frac{\theta_1(2p)p}{2}}\,\dd s\nonumber\\
&\leqslant C(1+t)^{-\frac{1}{2}}M[u](t)^{\frac{p}{2}},
\end{align}
where we used Lemma 4.1 in \cite{Cui2001} and the condition that $\theta_1(2p)p/2>1$ under the assumption $p>3$.

Now, we begin with the estimate of $\|\ml{D}_2u_{N_2}(t,\cdot)\|_{L^2}$. Taking the consideration of \eqref{EQQQ06} again with H\"older's inequality, we have
\begin{align}\label{est30}
&\left\|\ml{D}_2E_1(t-s,\cdot)\ast_{(\cdot)}|u_t(s,\cdot)|^q\right\|_{L^2}\notag\\
&\leqslant C(1+t-s)^{-\frac{1}{2}}\left(\left\||u_t(s,\cdot)|^q\right\|_{L^2}+\left\|\nabla |u_t(s,\cdot)|^q\right\|_{L^2}\right)\nonumber\\
&\leqslant C(1+t-s)^{-\frac{1}{2}}\left(\|u_t(s,\cdot)\|^q_{L^{2q}}+\left\||u_t(s,\cdot)|^{q-1}\nabla u_t(s,\cdot)\right\|_{L^2}\right)\nonumber\\
&\leqslant C(1+t-s)^{-\frac{1}{2}}\left(\|u_t(s,\cdot)\|^q_{L^{2q}}+\||u_t(s,\cdot)|^{q-1}\|_{L^{2q/(q-1)}}\|\nabla u_t(s,\cdot)\|_{L^{2q}}\right)\nonumber\\
&= C(1+t-s)^{-\frac{1}{2}}\left(\|u_t(s,\cdot)\|^q_{L^{2q}}+\|u_t(s,\cdot)\|^{q-1}_{L^{2q}}\|\nabla u_t(s,\cdot)\|_{L^{2q}}\right)\nonumber\\
&\doteq C\left(K_1(t,s)+K_2(t,s)\right),
\end{align}
where 
\begin{align*}
K_1(t,s)&\doteq (1+t-s)^{-\frac{1}{2}}\|u_t(s,\cdot)\|^q_{L^{2q}},\\
K_2(t,s)&\doteq (1+t-s)^{-\frac{1}{2}}\|u_t(s,\cdot)\|^{q-1}_{L^{2q}}\|\nabla u_t(s,\cdot)\|_{L^{2q}}.
\end{align*}
Again, by  $u(t,\cdot)\in H^2(\Omega)$ for $t\in[0,T]$, the Gagliardo-Nirenberg inequality can be applied to get
\begin{align*}
\|u_t(s,\cdot)\|_{L^{2q}}&\leqslant C\|u_t(s,\cdot)\|^{1-\theta_2(2q)}_{L^2}\|\nabla u_t(s,\cdot)\|^{\theta_2(2q)}_{L^2},\\
\|\nabla u_t(s,\cdot)\|_{L^{2q}}&\leqslant C\|\nabla u_t(s,\cdot)\|^{1-\theta_2(2q)}_{L^2}\|\Delta u_t(s,\cdot)\|^{\theta_2(2q)}_{L^2},
\end{align*}
where $\theta_2(2q):=\theta(2q)=2(\frac{1}{2}-\frac{1}{2q})=1-\frac{1}{q}$.\\
 Due to the fact that $\psi(s,x)>0$ for all $x\in\Omega$ and $s\in[0,t]$, the next two estimates hold:
\begin{align}\label{est31}
\|u_t(s,\cdot)\|_{L^{2q}}&\leqslant C(1+s)^{-\frac{1}{2}}W[u](s)^{\frac{1}{2}},\\
\|\nabla u_t(s,\cdot)\|_{L^{2q}}&\leqslant C(1+s)^{-\frac{1-\theta_2(2q)}{2}}W[u](s)^{\frac{1-\theta_2(2q)}{2}}\left\|\ee^{\psi(s,\cdot)}\Delta u_t(s,\cdot)\right\|_{L^2}^{\theta_2(2q)}.\label{est32}
\end{align}

Therefore, the combination of \eqref{est30} and \eqref{est31} yields
\begin{align}\label{est33}
\int_0^tK_1(t,s)\,\dd s&\leqslant C\int_0^t(1+t-s)^{-\frac{1}{2}}(1+s)^{-\frac{q}{2}}W[u](s)^{\frac{q}{2}}\,\dd s\nonumber\\
&\leqslant CM[u](t)^{\frac{q}{2}}\int_0^t(1+t-s)^{-\frac{1}{2}}(1+s)^{-\frac{q}{2}}\,\dd s\nonumber\\
&\leqslant C(1+t)^{-\frac{1}{2}}M[u](t)^{\frac{q}{2}},
\end{align}
where Lemma 4.1 in \cite{Cui2001} has been applied again with $q>2$.\\
To estimate another term with respect to $K_2(t,s)$, using \eqref{est31} and \eqref{est32} one may obtain
\begin{align}\label{est34}
\int_0^tK_2(t,s)\,\dd s& \leqslant  C\int_0^t(1+t-s)^{-\frac{1}{2}}(1+s)^{-\frac{q-\theta_2(2q)}{2}}W[u](s)^{\frac{q-\theta_2(2q)}{2}}\left\|\ee^{\psi(s,\cdot)}\Delta u_t(s,\cdot)\right\|^{\theta_2(2q)}_{L^2}\dd s\nonumber\\
&\leqslant C\left(\int_0^t(1+t-s)^{-\frac{1}{2-\theta_2(2q)}}(1+s)^{-\frac{q-\theta_2(2q)}{2-\theta_2(2q)}}W[u](s)^{\frac{q-\theta_2(2q)}{2-\theta_2(2q)}}\,\dd s\right)^{(2-\theta_2(2q))/2}\nonumber\\
&\qquad\times\left(\int_0^t\left\|\ee^{\psi(s,\cdot)}\Delta u_t(s,\cdot)\right\|^{2}_{L^2}\dd s\right)^{\theta_2(2q)/2}\nonumber\\
&\leqslant CM[u](t)^{\frac{q-\theta_2(2q)}{2}}\left(\int_0^t(1+t-s)^{-\frac{1}{2-\theta_2(2q)}}(1+s)^{-\frac{q-\theta_2(2q)}{2-\theta_2(2q)}}\,\dd s\right)^{(2-\theta_2(2q))/2}\nonumber\\
&\qquad\times\left(\int_0^t\left\|\ee^{\psi(s,\cdot)}\Delta u_t(s,\cdot)\right\|^{2}_{L^2}\dd s\right)^{\theta_2(2q)/2}\nonumber\\
&\leqslant C(1+t)^{-\frac{1}{2}}M[u](t)^{\frac{q-\theta_2(2q)}{2}}\left(\int_0^t\left\|\ee^{\psi(s,\cdot)}\Delta u_t(s,\cdot)\right\|^{2}_{L^2}\dd s\right)^{\theta_2(2q)/2},
\end{align}
where H\"older's inequality is employed since $\theta_2(2q)<1$ for $q>2$. Note that here we have also used Lemma 4.1 in \cite{Cui2001}, because
\begin{align*}
\frac{1}{2-\theta_2(2q)}<1\quad\mbox{and}\quad  \frac{q-\theta_2(2q)}{2-\theta_2(2q)}>1.
\end{align*}

 Moreover, Proposition \ref{prop1} and \eqref{est16} tell us the integration of higher-order weighted energy can be controlled by the following way:
 \begin{align*}
 \int_0^t\left\|\ee^{\psi(t,\cdot)}\Delta u_t(s,\cdot)\right\|_{L^2}^2\dd s&\leqslant \left\|\ee^{\psi(0,\cdot)}\nabla u_1\right\|_{L^2}^2+\left\|\ee^{\psi(0,\cdot)}\Delta u_0\right\|^2_{L^2}+C\int_0^t\left\|\ee^{\psi(s,\cdot)}F(u)(s,\cdot)\right\|_{L^2}^2\dd s\\
 &\leqslant I_{\mathrm{exp}}[u_0,u_1]+ C\left(M[u](t)^p+M[u](t)^q\right).
 \end{align*}
All in all, we may conclude
\begin{align}\label{est35}
\int_0^tK_2(t,s)\,\dd s&\leqslant C(1+t)^{-\frac{1}{2}}M[u](t)^{\frac{q-\theta_2(2q)}{2}}\notag\\
&\quad\times\left(I_{\mathrm{exp}}[u_0,u_1]^{\frac{\theta_2(2q)}{2}}+M[u](t)^{\frac{p\theta_2(2q)}{2}}+M[u](t)^{\frac{q\theta_2(2q)}{2}}\right).
\end{align}
Using the derived estimates \eqref{est33} and \eqref{est35}, we claim that
\begin{align}\label{est36}
&(1+t)^{\frac{1}{2}}\|\ml{D}_2u_{N_2}(t,\cdot)\|_{L^2}\notag\\
&\leqslant C(1+t)^{\frac{1}{2}}\int_0^t\left\|\ml{D}_2E(t-s,\cdot)\ast_{(\cdot)}|u_t(s,\cdot)|^q\right\|_{L^2}\dd s\notag\\
&\leqslant C(1+t)^{\frac{1}{2}}\left( \int_0^tK_1(t,s)\,\dd s+\int_0^tK_2(t,s)\,\dd s\right)\nonumber\\
&\leqslant CM[u](t)^{\frac{q}{2}}+M[u](t)^{\frac{q-\theta_2(2q)}{2}}\left(I_{\mathrm{exp}}[u_0,u_1]^{\frac{\theta_2(2q)}{2}}+M[u](t)^{\frac{p\theta_2(2q)}{2}}+M[u](t)^{\frac{q\theta_2(2q)}{2}}\right).
\end{align}

It remains to estimate $\|\ml{D}^j_1u_{N_2}(t,\cdot)\|_{L^2}$ for $j=0,1$. Let us apply Theorem 2.1 in \cite{Ikehata-Inoue2008} to have
\begin{align*}
   \left\|\ml{D}^j_1E_1(t-s,\cdot)\ast_{(\cdot)}|u_t(s,\cdot)|^q\right\|_{L^2}&\leqslant C(1+t-s)^{-\frac{j}{2}}\left(\left\|u_t(s,\cdot)\right\|_{L^{2q}}^q+\left\|d(\cdot) |u_t(s,\cdot)|^q\right\|_{L^2}\right).
\end{align*}
Using Lemma 2.5 in \cite{Ikehata-Inoue2008} and replacing $u$ by $u_t$, one derives
\begin{align*}
\left\|d(\cdot) |u_t(s,\cdot)|^q\right\|_{L^2}\leqslant C(1+s)^{\frac{(2+\rho)(1+\varepsilon_1)}{2}}\left\| \ee^{\delta\psi(s,\cdot)}u_t(s,\cdot)\right\|^q_{L^{2q}}
\end{align*}
for any $\varepsilon_1>0$, $\rho>0$ and $\delta>0$. It is obvious that
\begin{align*}
\|u_t(s,\cdot)\|_{L^{2q}}^q\leqslant C(1+s)^{\frac{(2+\rho)(1+\varepsilon_1)}{2}}\left\| \ee^{\delta\psi(s,\cdot)}u_t(s,\cdot)\right\|^q_{L^{2q}},
\end{align*}
which implies that
\begin{align*}
\left\|\ml{D}^j_1E_1(t-s,\cdot)\ast_{(\cdot)}|u_t(s,\cdot)|^q\right\|_{L^2}\leqslant C(1+s)^{\frac{(2+\rho)(1+\varepsilon_1)}{2}}\left\| \ee^{\delta\psi(s,\cdot)}u_t(s,\cdot)\right\|^q_{L^{2q}}.
\end{align*}
Eventually, the following estimate holds:
\begin{align*}
	\left\|\ml{D}^j_1u_{N_2}(t,\cdot)\right\|_{L^2}&\leqslant C\int_0^t\left\|\ml{D}^j_1E_1(t-s,\cdot)\ast_{(\cdot)}|u_t(s,\cdot)|^q\right\|_{L^2}\dd s\nonumber\\
	&\leqslant C\int_0^t(1+t-s)^{-\frac{j}{2}}(1+s)^{\frac{(2+\rho)(1+\varepsilon_1)}{2}}\left\| \ee^{\delta\psi(s,\cdot)}u_t(s,\cdot)\right\|^q_{L^{2q}}\dd s \nonumber\\
	&\leqslant  C\int_0^t(1+t-s)^{-\frac{j}{2}}(1+s)^{-(1+\eta)}\left((1+s)^{\frac{(2+\rho)(1+\varepsilon_1)}{2q}+\frac{1+\eta}{q}}\left\| \ee^{\delta\psi(s,\cdot)}u_t(s,\cdot)\right\|_{L^{2q}}\right)^q\dd s \nonumber\\
	&\leqslant  C(1+t)^{-\frac{j}{2}}\left(\sup\limits_{s\in[0,t]}(1+s)^{\beta_5}\left\| \ee^{\delta\psi(s,\cdot)}u_t(s,\cdot)\right\|_{L^{2q}}\right)^q,
\end{align*}
where the constant is denoted by
\begin{align*}
\beta_5\doteq\frac{(2+\rho)(1+\varepsilon_1)}{2q}+\frac{1+\eta}{q}.
\end{align*}
 By Lemma 2.3 from \cite{Ikehata-Inoue2008}, we get for $j=0,1$ that
\begin{align*}
	\left\|\ml{D}^j_1u_{N_2}(t,\cdot)\right\|_{L^2}&\leqslant C(1+t)^{-\frac{j}{2}}\left(\sup\limits_{s\in[0,t]}(1+s)^{\beta_5}(1+s)^{\frac{(2+\rho)(1-\theta_2(2q))}{2}}\| \nabla u_t(s,\cdot)\|^{1-\delta}_{L^2}\left\| \ee^{\psi(s,\cdot)}\nabla u_t(s,\cdot)\right\|_{L^2}^\delta\right)^q\nonumber\\
	&= C(1+t)^{-\frac{j}{2}}\left(\sup_{s\in[0,t]}(1+s)^{\beta_6} \left((1+s)^{\frac{1}{2}}\|\nabla u_t(s,\cdot)\|_{L^2}\right)^{1-\delta}\left\| \ee^{\psi(s,\cdot)}\nabla u_t(s,\cdot)\right\|_{L^2}^\delta\right)^q\nonumber\\
	&\leqslant  C(1+t)^{-\frac{j}{2}}\left(\sup_{s\in[0,t]}(1+s)^{\beta_6}W[u](s)^{\frac{1}{2}}\right)^q
\end{align*}
where the constant is defined by
\begin{align*}
\beta_6\doteq\beta_5+\frac{(2+\rho)(1-\theta_2(2q))}{2}-\frac{1-\delta}{2}=\frac{\varepsilon_1(2+\rho)+2\eta}{2q}+\frac{\delta}{2}-\left(\frac{1}{2}-\frac{6+2\rho}{2q}\right)<0
\end{align*}
if $q>6+2\rho$ by taking sufficiently small constants $\varepsilon_1$, $\eta$ and $\delta$.\\
In conclusion, we derive
\begin{equation}\label{est37}
\left\|\ml{D}^j_1u_{N_2}(t,\cdot)\right\|_{L^2}\leqslant C(1+t)^{-\frac{j}{2}}M[u](t)^{\frac{q}{2}}.
\end{equation}

Finally, summarizing the derive estimates \eqref{est25}, \eqref{est29}, \eqref{est36} and \eqref{est37} we conclude
\begin{align*}
\|u^{\non}(t,\cdot)\|_{L^2}^2+(1+t)\|\ml{D}u^{\non}(t,\cdot)\|_{L^2}^2\leqslant C\left(M[u](t)^{p}+M[u](t)^{q}+M[u](t)^{q+\frac{1}{q}-1}\widetilde{M}[u](t;p,q)\right),
\end{align*}
where we would like to show again that
\begin{align*}
\widetilde{M}[u](t;p,q)= I_{\mathrm{exp}}[u_0,u_1]^{\frac{q-1}{q}}+M[u](t)^{\frac{p(q-1)}{q}}+M[u](t)^{q-1}.
\end{align*}
Then, we derive our desired estimate \eqref{Important.01}. Namely, we can claim that $u\in X(T)$ and $N$ maps $X(T)$ into itself.

To prove the Lipschitz condition \eqref{Important.02}, we may compute
\begin{align*}
M[Nu-Nv](T)&=M\left[\int_0^tE_1(t-s,x)\ast_{(x)}\left(F(u)(s,x)-F(v)(s,x)\right)\dd s\right](T)\\
&\doteq M[Nw](T),
\end{align*}
where
\begin{align*}
w(t,x)\doteq u(t,x)-v(t,x).
\end{align*}
In other words, we need to estimate the next four norms:
\begin{align*}
\left\|\ee^{\psi(t,\cdot)}\ml{D}Nw(t,\cdot)\right\|_{L^2}^2,\quad \left\|\ml{D}_1Nw(t,\cdot)\right\|^2_{L^2},\quad \left\|\ml{D}_2Nw(t,\cdot)\right\|^2_{L^2}\quad\text{and}\quad \left\|Nw(t,\cdot)\right\|^2_{L^2}.
\end{align*}

First of all, by applying
\begin{align}\label{Young}
\left||u(s,x)|^p-|v(s,x)|^p\right|\leqslant C|w(s,x)|\left(|u(s,x)|^{p-1}+|v(s,x)|^{p-1}\right),
\end{align}
and H\"older's inequality, one has the estimate for solution in the weighted $L^2$-norm
\begin{align*}
&\left\|\ee^{\psi(s,\cdot)}\left(|u(s,\cdot)|^p-|v(s,\cdot)|^p\right)\right\|_{L^2}\\
&\leqslant C\left\|\ee^{\psi(s,\cdot)}|w(s,\cdot)|\left(|u(s,\cdot)|^{p-1}+|v(s,\cdot)|^{p-1}\right)\right\|_{L^2}\\
&\leqslant C\left\|\ee^{\frac{1}{p}\psi(s,\cdot)}w(s,\cdot)\right\|_{L^{2p}}\left(\left\|\ee^{\frac{1}{p}\psi(s,\cdot)}u(s,\cdot)\right\|_{L^{2p}}^{p-1}+\left\|\ee^{\frac{1}{p}\psi(s,\cdot)}v(s,\cdot)\right\|_{L^{2p}}^{p-1}\right),
\end{align*}
and the estimate for solution in the $L^2$-norm
\begin{align*}
\left\||u(s,\cdot)|^p-|v(s,\cdot)|^p\right\|_{L^2}\leqslant C\|w(s,\cdot)\|_{L^{2p}}\left(\|u(s,\cdot)\|_{L^{2p}}^{p-1}+\|v(s,\cdot)\|_{L^{2p}}^{p-1}\right).
\end{align*}
Next, repeating the same procedure as the proof of \eqref{Important.01}, we may derive
\begin{align}\label{Imp03}
\left\|\ee^{\psi(t,\cdot)}\ml{D}Nw(t,\cdot)\right\|_{L^2}^2\leqslant C M[w](t)\sum\limits_{r=p-1,q-1,(p-1)/2,(q-1)/2}\left(M[u](t)^r+M[v](t)^r\right),
\end{align}
\begin{align}\label{Imp04}
\|Nw(t,\cdot)\|_{L^2}^2+(1+t)\|\ml{D}_1Nw(t,\cdot)\|_{L^2}^2\leqslant C M[w](t)\sum\limits_{r=p-1,q-1}\left(M[u](t)^r+M[v](t)^r\right),
\end{align}
providing that our assumptions $p,q>6+2\rho_0$ hold.

Then, to conclude the remaining part of the Lipschitz condition, we just need the estimate of 
\begin{align*}
L(t)\doteq\left\|\nabla(F(u)(s,\cdot)-F(v)(s,\cdot))\right\|_{L^2}.
\end{align*}
We divide the proof by two parts
\begin{align*}
L_1(t)&\doteq\left\|\nabla\left(|u(s,\cdot)|^p-|v(s,\cdot)|^p\right)\right\|_{L^2},\\
L_2(t)&\doteq\left\|\nabla\left(|u_t(s,\cdot)|^q-|v_t(s,\cdot)|^q\right)\right\|_{L^2}.
\end{align*}
Obviously, it follows $L(t)\leqslant C L_1(t)+CL_2(t)$.\\
Let us sketch the proof due to the fact the proof is standard (see, for example, \cite{Palmieri-Reissig,Djaouti-Reissig2018,Chen-Reissig2019}). Setting $g(f)=f|f|^{p-2}$, we may rewrite the different of nonlinearity by
\begin{align*}
|u(s,x)|^p-|v(s,x)|^p=p\int_0^1w(s,x)g\left(\nu u(s,x)+(1-\nu)v(s,x)\right)\dd \nu.
\end{align*}
Consequently, some applications of Minkowski's inequality and the Leibniz rule show that
\begin{align*}
\left\|\nabla \left(|u(s,\cdot)|^p-|v(s,\cdot)|^p\right)\right\|_{L^2}&\leqslant C\int_0^1\|\nabla w(s,\cdot)\|_{L^{r_1}}\|g\left(\nu u(s,\cdot)+(1-\nu)v(s,\cdot)\right)\|_{L^{r_2}}\dd\nu\\
&\quad+C\int_0^1\|w(s,\cdot)\|_{L^{r_3}}\|\nabla g\left(\nu u(s,\cdot)+(1-\nu)v(s,\cdot)\right)\|_{L^{r_4}}\dd\nu,
\end{align*}
where $1/r_1+1/r_2=1/r_3+1/r_4=1/2$ and these parameters will be determined later.\\
We notice from the Gagliardo-Nirenberg inequality that
\begin{align*}
\int_0^1\|g\left(\nu u(s,\cdot)+(1-\nu)v(s,\cdot)\right)\|_{L^{r_2}}\dd\nu&\leqslant C\left(\|u(s,\cdot)\|_{L^{r_2(p-1)}}^{p-1}+\|v(s,\cdot)\|_{L^{r_2(p-1)}}^{p-1}\right),\\
\|u(s,\cdot)\|_{L^{r_2(p-1)}}&\leqslant C (1+s)^{-\frac{\theta(r_2(p-1))}{2}}W[u](s)^{\frac{1}{2}},\\
\|v(s,\cdot)\|_{L^{r_2(p-1)}}&\leqslant C (1+s)^{-\frac{\theta(r_2(p-1))}{2}}W[v](s)^{\frac{1}{2}},\\
\|\nabla w(s,\cdot)\|_{L^{r_1}}&\leqslant C (1+s)^{-\frac{1}{2}}W[w](s)^{\frac{1}{2}},\\
\|w(s,\cdot)\|_{L^{r_3}}&\leqslant C(1+s)^{-\frac{\theta(r_3)}{2}}W[w](s)^{\frac{1}{2}},
\end{align*}
where $\theta(r_2(p-1))=1-1/(r_2(p-1))$ and $\theta(r_3)=1-2/r_3$.\\
Furthermore, the applications of the chain rule and the Gagliardo-Nirenberg inequality imply
\begin{align*}
&\|\nabla g\left(\nu u(s,\cdot)+(1-\nu)v(s,\cdot)\right)\|_{L^{r_4}}\\
&\leqslant C\|\nu u(s,\cdot)+(1-\nu)v(s,\cdot)\|_{L^{r_5}}^{p-2}\|\nabla(\nu u(s,\cdot)+(1-\nu)v(s,\cdot))\|_{L^{r_6}}\\
&\leqslant C\left(\|u(s,\cdot)\|_{L^{r_5}}+\|v(s,\cdot)\|_{L^{r_5}}\right)^{p-2}\left(\|\nabla u(s,\cdot)\|_{L^{r_6}}+\|\nabla v(s,\cdot)\|_{L^{r_6}}\right)\\
&\leqslant C(1+s)^{-\frac{\theta(r_5)(p-2)+1}{2}}\left(W[u](s)^{\frac{p-1}{2}}+W[v](s)^{\frac{p-1}{2}}\right)
\end{align*}
with $1/r_4=(p-2)/r_5+ 1/r_6$ and $\theta(r_5)=1-2/r_5$.\\
Combining with the above derived estimates, we obtain
\begin{align*}
L_1(t)\leqslant C \left((1+s)^{d_1}+(1+s)^{d_2}\right)W[w](s)^{\frac{1}{2}}\left(W[u](s)^{\frac{p-1}{2}}+W[v](s)^{\frac{p-1}{2}}\right),
\end{align*}
where the parameters in the estimate are defined by
\begin{align*}
d_1\doteq-\frac{p}{2}+\frac{1}{r_2}\quad\mbox{and}\quad d_2\doteq\frac{1}{r_3}+\frac{p-2}{r_5}-\frac{p}{2}.
\end{align*}
By choosing
\begin{align*}
r_1=r_4=\frac{1}{\varepsilon_2},\quad r_2=r_3=\frac{2}{1-2\varepsilon_2},\quad r_5=r_6=\frac{2(p-2)}{\varepsilon_2}
\end{align*}
with sufficiently small constant $\varepsilon_2\rightarrow0^+$, we can show when $p>3$, the constants satisfy
\begin{align*}
d_1<-1\quad\mbox{and}\quad d_2<-1.
\end{align*}
By using the same approach of the above, we get
\begin{align*}
L_2(t)&\leqslant C (1+s)^{-\frac{2q-1}{4}}W[w](s)^{\frac{1}{4}}\left(W[u](s)^{\frac{1}{2}}+W[v](s)^{\frac{1}{2}}\right)\left\|\ee^{\psi(s,\cdot)}\Delta w_t(s,\cdot)\right\|_{L^2}^{\frac{1}{2}}\\
&\quad+C(1+s)^{-\frac{2q-1}{4}}W[w](s)^{\frac{1}{2}}\widetilde{\widetilde{W}}[u,v](s;q)\left(\left\|\ee^{\psi(s,\cdot)}\Delta u_t(s,\cdot)\right\|_{L^2}^{\frac{1}{2}}+\left\|\ee^{\psi(s,\cdot)}\Delta v_t(s,\cdot)\right\|_{L^2}^{\frac{1}{2}}\right),
\end{align*}
where
\begin{align*}
\widetilde{\widetilde{W}}[u,v](t;q)\doteq\left(W[u](t)^{\frac{1}{2}}+W[v](t)^{\frac{1}{2}}\right)^{q-2}\left(W[u](t)^{\frac{1}{4}}+W[v](t)^{\frac{1}{4}}\right).
\end{align*}
Finally, we conclude
\begin{align}\label{Imp05}
\left\|\ml{D}_2Nw(t,\cdot)\right\|^2_{L^2}&\leqslant C M[w](t)\left(M[u](t)^{p-1}+M[v](t)^{p-1}\right)\notag\\
&\quad+CM[w](t)\left(M[w](t)^{p-1}+M[w](t)^{q-1}\right)^{\frac{1}{2}}\left(M[u](t)+M[v](t)\right)\notag\\
&\quad+CM[w](t)\widetilde{\widetilde{M}}[u,v](t)\left(M[u](t)^p+M[v](t)^q\right)^{\frac{1}{2}}.
\end{align}
Summarizing the derived estimates \eqref{Imp03}, \eqref{Imp04} and \eqref{Imp05}, we claim \eqref{Important.02} holds. 

Applying the Banach fixed-point theorem, our proof is complete.

\section{Final remark}\label{Sec.Final.Remark}
\setcounter{equation}{0}
In this paper, we prove global (in time) existence of small data solution 
\begin{align*}
u\in\ml{C}\left([0,\infty),H^2(\Omega)\cap H^1_0(\Omega)\right)\cap\ml{C}^1\left([0,\infty),H^1(\Omega)\right)
\end{align*}
to the exterior problem \eqref{Eq.Semi.BVP} with $p,q>6+2\rho_0$. Let us give some  explanations for the conditions of $p$ and $q$. Actually, we may observe from the proof that the condition for the exponent $p$ is influenced by the value of $\rho$. Because we apply the weighted function $\ee^{\psi(t,x)}$ with parameter $\rho$ on the energy estimates in this paper, the interplay between the power nonlinearity $|u|^p$ and $|u_t|^q$ comes. Moreover, to control the higher-order energy, we choose a suitable parameter $\rho$ in the weighted function such that $\rho>\rho_0$.

To end the paper, we give some remarks on the semilinear strongly damped wave equations with an exterior domain for higher-dimensional case, namely,
\begin{equation}\label{Eq.Semi.BVP.higher}
\begin{cases}
u_{tt}-\Delta u -\Delta u_t =f(u,u_t;p,q), &x\in \Omega,\,t>0,\\
u(0,x)=  u_0(x),\,\,u_t(0,x)=  u_1(x),&x\in \Omega,\\
u=0,&x\in \partial\Omega,\,t>0,
\end{cases}
\end{equation} 
with $p,q>1$, where $\Omega\subset\mathbb{R}^n$ for $n\geqslant 3$ is an exterior domain with a compact smooth boundary $\partial\Omega$. Without loss of generality, we assume again that $0\notin\overline{\Omega}$. In \eqref{Eq.Semi.BVP.higher}, the nonlinearity can be represented by
\begin{align*}
f(u,u_t;p,q)\doteq a|u|^p+b|u_t|^q
\end{align*}
with $a,b\geqslant0$ but $a+b\neq0$, and $p,q>1$. From \cite{Ikehata-Inoue2008}, we understand the difficulties to study global (in time) existence of small data solutions in higher-dimension ($n\geqslant 3$) is that we must restrict the power $p$ to the range $1<p,q\leqslant n/(n-2)$, which is restricted by the application of the Gagliardo-Nirenberg inequality or Sobolev inequality. Nevertheless, by applying the weighted energy method, we always proposed the a strong condition such as $p>6$ for $f(u,u_t;p,q)=|u|^p$ in \cite{Ikehata-Inoue2008}, and $p,q>6+2\rho_0$ for $f(u,u_t;p,q)=|u|^p+|u_t|^q$ in this paper for 2D. It will immediately leads to the empty range of $p$ and $q$. To solve this difficulty, motivated by the main approach of this paper, we observe that there exists a possibility to consider higher-order energy solutions with even large regular data. At this time, we may apply the embedding $H^s(\Omega)\hookrightarrow L^{\infty}(\Omega)$ for $s>n/2$ rather than apply the Gagliardo-Nirenberg inequality for estimating the solutions in the $L^{2p}$ and $L^{2q}$ norms. Furthermore, the benefit of this approach is to weaken the upper bound restrictions of $p,q$ from $n/(n-2)$ to $\infty$ (One may see this effect in Section 6.2 of \cite{Palmieri-Reissig}). Thus, we just need to derive higher-order energy estimates with large regular data. But, we should emphasize that higher-order energy estimates with exponentially weighted function are still open.

\appendix
\section{Gagliardo-Nirenberg type inequalities}
Let us introduce a well-known interpolation inequality, i.e., the Gagliardo-Nirenberg inequality in exterior domains in 2D. This result has been proved by \cite{Crispo-Marmonti2004}.
\begin{lem}
Let $1\leqslant r\leqslant q\leqslant +\infty$. If $v\in H^1(\Omega)$ with a exterior domain $\Omega\subset\mb{R}^2$ with compact boundary, having the cone property, then the inequality holds
\begin{align*}
\|v\|_{L^q}\leqslant M\|v\|_{L^2}^{1-\theta(q)}\|\nabla v\|_{L^2}^{\theta(q)},
\end{align*}
where $M>0$ is a constant independent of $v$ and $\theta(q)=2(1/2-1/q)\in(0,1]$.
\end{lem}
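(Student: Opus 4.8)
The plan is to reduce the inequality to its classical counterpart on the whole plane and to exploit the two features of dimension two that are in force here: the Sobolev embedding $H^1(\mb{R}^2)\hookrightarrow L^q(\mb{R}^2)$ holds for every finite $q\geqslant 2$, and the Dirichlet integral $\int|\nabla v|^2$ is conformally (in particular, scale) invariant in the plane. The first ingredient is the known estimate on $\mb{R}^2$,
\begin{align*}
\|w\|_{L^q(\mb{R}^2)}\leqslant C_q\,\|w\|_{L^2(\mb{R}^2)}^{1-\theta(q)}\,\|\nabla w\|_{L^2(\mb{R}^2)}^{\theta(q)},\qquad \theta(q)=2\big(\tfrac12-\tfrac1q\big)=1-\tfrac2q,
\end{align*}
valid for $w\in H^1(\mb{R}^2)$ and $q\in[2,\infty)$; I would recall its short proof, applying the borderline Sobolev inequality $W^{1,1}(\mb{R}^2)\hookrightarrow L^2(\mb{R}^2)$ to $|w|^{q/2}$ and redistributing the resulting powers by H\"older's inequality (for $q\in[2,4]$ one may instead interpolate between the $L^2$ norm and Ladyzhenskaya's $L^4$ bound).

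Next I would settle the model exterior domain $D=\mb{R}^2\setminus\overline{B_1}$. Given $v\in H^1(D)$, extend it across the hole by the two-dimensional Kelvin transform, $(\ml{E}v)(x)=v(x/|x|^2)$ for $0<|x|<1$ and $\ml{E}v=v$ on $D$. A change of variables through the inversion $x\mapsto x/|x|^2$, which is conformal with conformal factor $|x|^{-2}$, gives $\|\ml{E}v\|_{L^2(B_1)}\leqslant\|v\|_{L^2(D)}$ and, because the Dirichlet integral is inversion invariant in the plane, $\|\nabla\ml{E}v\|_{L^2(B_1)}=\|\nabla v\|_{L^2(D)}$; crucially, no additive lower-order term appears. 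Hence $\ml{E}v\in H^1(\mb{R}^2)$ with $\|\ml{E}v\|_{L^2(\mb{R}^2)}\leqslant\sqrt2\,\|v\|_{L^2(D)}$ and $\|\nabla\ml{E}v\|_{L^2(\mb{R}^2)}\leqslant\sqrt2\,\|\nabla v\|_{L^2(D)}$, and applying the plane inequality to $\ml{E}v$ and restricting to $D$ yields the desired homogeneous Gagliardo--Nirenberg inequality on $D$.

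It remains to pass from $D$ to an arbitrary exterior domain $\Omega$ with compact smooth boundary, and this is the main obstacle. Since $\mb{R}^2\setminus\Omega$ consists of finitely many bounded components, the Riemann mapping theorem (one hole) or Koebe's theorem on maps onto circular domains (several holes) provides a conformal equivalence $\Phi$ of $\Omega$ with $\mb{R}^2\setminus\bigcup_i\overline{B_{r_i}(p_i)}$ fixing $\infty$; because $\partial\Omega$ is smooth, $\Phi$ extends to a diffeomorphism of the closures whose derivative $|\Phi'|$ is bounded above and below by positive constants on all of $\overline{\Omega}$ (boundary regularity of conformal maps). Conformal invariance then gives $\|\nabla(v\circ\Phi^{-1})\|_{L^2}=\|\nabla v\|_{L^2(\Omega)}$ exactly, while the two-sided bound on $|\Phi'|$ gives $\|v\circ\Phi^{-1}\|_{L^q}\simeq\|v\|_{L^q(\Omega)}$ and $\|v\circ\Phi^{-1}\|_{L^2}\simeq\|v\|_{L^2(\Omega)}$, so one is reduced to the circular model domain, where each disk $B_{r_i}(p_i)$ is filled by its own Kelvin reflection as above; to keep those reflections well defined one first dilates the configuration (licit by the $2$D scale invariance of $\int|\nabla v|^2$) so that the enlarged disks $B_{2r_i}(p_i)$ are pairwise disjoint, and, if needed, works with $v$ multiplied by a large-scale cut-off $\chi_R$ and lets $R\to\infty$, the errors $R^{-1}\|v\|_{L^2}$ and $\|v\|_{L^2(\{|x|>R\})}$ both vanishing in the limit. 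The genuinely delicate points are the uniform two-sided control of $|\Phi'|$ up to the boundary and the bookkeeping of the several-hole filling so that no spurious lower-order term survives; this is exactly why the proof must be routed through conformal models rather than a generic bounded extension, which would leave an additive $\|v\|_{L^2(\Omega)}$ that cannot be absorbed on an unbounded domain. The general exponent $r$ in the statement is handled by the same scheme with $L^2$ replaced by $L^r$ throughout and $\theta$ adjusted accordingly.
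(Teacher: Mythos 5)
The paper does not prove this lemma at all: it is quoted verbatim from Crispo--Maremonti \cite{Crispo-Marmonti2004}, so there is no in-paper argument to compare against and your proposal must stand on its own. Its first two steps do. The planar inequality $\|w\|_{L^q(\mb{R}^2)}\leqslant C_q\|w\|_{L^2}^{1-\theta}\|\nabla w\|_{L^2}^{\theta}$ for $2\leqslant q<\infty$ is standard, and the Kelvin extension across $\partial B_1$ for the model domain $D=\mb{R}^2\setminus\overline{B_1}$ is the right idea: the inversion preserves the Dirichlet integral and contracts the $L^2$ norm, the traces match on the unit circle, the origin is $H^1$-removable, and so no additive $\|v\|_{L^2}$ term is created --- which, as you correctly stress, is the whole point on an unbounded domain where it cannot be absorbed. (One side remark: the lemma's stated range $\theta(q)\in(0,1]$ would include $q=\infty$, where the inequality fails in 2D; your whole-plane step implicitly and correctly restricts to $q<\infty$.)

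The genuine gap is in the passage to a general exterior domain. First, the conformal reduction is a large piece of asserted machinery (Koebe uniformization onto a circular domain plus Kellogg--Warschawski boundary regularity plus the expansion $\Phi(z)=az+b+O(1/z)$ at infinity to get two-sided bounds on $|\Phi'|$); for a single smooth obstacle this can be made rigorous, but it is only sketched. Second, and more seriously, the multi-hole Kelvin filling as described is not well defined: the inversion about $\partial B_{r_i}(p_i)$ maps $B_{r_i}(p_i)\setminus\{p_i\}$ onto the \emph{entire} exterior of $\overline{B_{r_i}(p_i)}$, which contains the other holes $B_{r_j}(p_j)$ where $v$ is not defined; their preimages are small disks compactly contained in $B_{r_i}(p_i)$ on which your extension has no values. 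Dilating so that the $B_{2r_i}(p_i)$ are disjoint does not touch this problem, and patching with cutoffs reintroduces exactly the additive $\|v\|_{L^2}$ term the construction is designed to avoid. Repairing this requires either an iterated-reflection (Schottky-group) construction with a convergence argument for the accumulating images, or a different mechanism for killing the lower-order term (e.g.\ the scaling/limiting argument used by Crispo--Maremonti after a standard bounded extension near $\partial\Omega$). As written, the proof is complete only for $\Omega$ the conformal image of the complement of a single disk.
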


Next, we give a weighted version of Gagliardo-Nirenberg inequality in exterior domain. For $\sigma>0$, $\rho>0$ and $t\geqslant0$, we may define a family of weighted function space by
\begin{align*}
H^1_{\sigma\psi(t,\cdot)}(\Omega)\doteq\left\{f\in H^1(\Omega):\,\left\|\ee^{\sigma\psi(t,\cdot)}f\right\|_{L^2}^2+\left\|\ee^{\sigma\psi(t,\cdot)}\nabla f\right\|_{L^2}^2<\infty\,\,\mbox{for}\,\,t\in[0,T]\right\}.
\end{align*}

According to our choice of weighted function $\psi(t,x)$, the following Gagliardo-Nirenberg type inequality holds (cf. Lemma 2.3 in \cite{Ikehata-Inoue2008})
\begin{lem}\label{Lem.G-N}
Let $\theta(q)=2(1/2-1/q)$ and $0\leqslant\theta(q)<1$ and let $0<\sigma\leqslant 1$, $\rho>0$. If $v\in H^{1}_{\psi(t,\cdot)}(\Omega)$ with $t\geqslant 0$, then the inequality holds
\begin{align*}
\left\|\ee^{\sigma\psi(t,\cdot)}v\right\|_{L^q}\leqslant C(1+t)^{\frac{(2+\rho)(1-\theta(q))}{2}}\|\nabla v\|_{L^2}^{1-\sigma}\left\|\ee^{\psi(t,\cdot)}\nabla v\right\|_{L^2}^{\sigma}
\end{align*}
for each $t\geqslant 0$, where $C=C_{\sigma}>0$ is a positive constant.
\end{lem}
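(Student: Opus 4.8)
The plan is to deduce the weighted inequality from the unweighted Gagliardo-Nirenberg inequality recalled just above, applied to the auxiliary function $w\doteq\ee^{\sigma\psi(t,\cdot)}v$, extracting the time factor $(1+t)^{(2+\rho)(1-\theta(q))/2}$ from the structure of $\psi$. The only features of $\psi$ that I would use are $\Delta\psi(t,x)=2(1+t)^{-(2+\rho)}$, which is strictly positive and independent of $x$, and $|\nabla\psi(t,x)|=|x|(1+t)^{-(2+\rho)}$; both require only $\rho>0$, so the restriction $\rho>\rho_0$ plays no role here.

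First I would square the pointwise identity $\ee^{\sigma\psi}\nabla v=\nabla w-\sigma w\nabla\psi$ (legitimate since $\ee^{\sigma\psi}v=w$) and integrate over $\Omega$, treating the cross term by integration by parts,
\[
-2\sigma\int_\Omega w\,\nabla\psi\cdot\nabla w\,\dd x=-\sigma\int_\Omega\nabla\psi\cdot\nabla(w^2)\,\dd x=\sigma\int_\Omega(\Delta\psi)\,w^2\,\dd x+\sigma\int_{\partial\Omega}w^2(-\partial_\nu\psi)\,\dd S .
\]
The contribution at infinity vanishes because $w\in H^1(\Omega)$ — which I would justify by truncation, using that $v\in H^1_{\psi(t,\cdot)}(\Omega)$ forces fast spatial decay and, together with $|\nabla\psi(t,x)|=|x|(1+t)^{-(2+\rho)}$, makes $\ee^{\sigma\psi}v\,\nabla\psi\in L^2(\Omega)$ — and the boundary term over the compact $\partial\Omega$ vanishes under the homogeneous Dirichlet condition satisfied by $v$ in all the applications (and, when the hole $\mathbb{R}^2\setminus\Omega$ is star-shaped with respect to the origin $0\notin\overline{\Omega}$, it is anyway nonnegative since then $-\partial_\nu\psi\geqslant0$ on $\partial\Omega$). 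This produces
\[
\left\|\ee^{\sigma\psi(t,\cdot)}\nabla v\right\|_{L^2}^2\geqslant\|\nabla w\|_{L^2}^2+\sigma\,\Delta\psi\,\|w\|_{L^2}^2+\sigma^2\left\|w\,\nabla\psi\right\|_{L^2}^2,
\]
and, since $\Delta\psi=2(1+t)^{-(2+\rho)}$ is constant in $x$, this yields both $\|\nabla w\|_{L^2}\leqslant\left\|\ee^{\sigma\psi(t,\cdot)}\nabla v\right\|_{L^2}$ and the weighted Poincar\'e-type bound $\|w\|_{L^2}\leqslant C_\sigma(1+t)^{(2+\rho)/2}\left\|\ee^{\sigma\psi(t,\cdot)}\nabla v\right\|_{L^2}$.

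Next I would feed these into the unweighted Gagliardo-Nirenberg inequality applied to $w$, namely $\|w\|_{L^q}\leqslant M\|w\|_{L^2}^{1-\theta(q)}\|\nabla w\|_{L^2}^{\theta(q)}$; since $(1-\theta(q))+\theta(q)=1$, the powers of $\left\|\ee^{\sigma\psi(t,\cdot)}\nabla v\right\|_{L^2}$ recombine into a single factor and the surviving power of $(1+t)$ is precisely $(2+\rho)(1-\theta(q))/2$, so $\left\|\ee^{\sigma\psi(t,\cdot)}v\right\|_{L^q}\leqslant C(1+t)^{(2+\rho)(1-\theta(q))/2}\left\|\ee^{\sigma\psi(t,\cdot)}\nabla v\right\|_{L^2}$. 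Finally, because $0<\sigma\leqslant1$, H\"older's inequality with conjugate exponents $1/(1-\sigma)$ and $1/\sigma$ applied to $\ee^{2\sigma\psi}|\nabla v|^2=|\nabla v|^{2(1-\sigma)}\bigl(\ee^{2\psi}|\nabla v|^2\bigr)^{\sigma}$ gives $\left\|\ee^{\sigma\psi(t,\cdot)}\nabla v\right\|_{L^2}\leqslant\|\nabla v\|_{L^2}^{1-\sigma}\left\|\ee^{\psi(t,\cdot)}\nabla v\right\|_{L^2}^{\sigma}$; chaining the last two estimates gives the assertion, with $C=C_\sigma$.

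The step I expect to be the main obstacle is the justification of the integration by parts, i.e.\ checking that $w=\ee^{\sigma\psi(t,\cdot)}v$ really lies in $H^1(\Omega)$ with no boundary contribution at infinity, and the correct bookkeeping of the boundary term over $\partial\Omega$; both are governed by the exponentially weighted hypothesis on $v$ and, for the latter, by the Dirichlet condition (or the normalization $0\notin\overline{\Omega}$). Everything downstream of the displayed identity is a one-line computation.
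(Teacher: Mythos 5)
Your argument is correct, and it is in substance the proof behind the result the paper merely cites (Lemma 2.3 of Ikehata--Inoue): the paper offers no proof of its own, and the substitution $w=\ee^{\sigma\psi}v$, the integration by parts converting the cross term into $\sigma\int_\Omega(\Delta\psi)w^2\,\dd x$ with $\Delta\psi=2(1+t)^{-(2+\rho)}>0$, the unweighted Gagliardo--Nirenberg inequality applied to $w$, and the final H\"older interpolation in $\sigma$ are exactly the standard chain used in that reference and its predecessors (Todorova--Yordanov, Ikehata--Tanizawa). You also correctly observe that only $\rho>0$ is needed here, consistent with the hypothesis of the lemma.

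Two of your side remarks deserve a sharper formulation. First, for $\sigma=1$ the claim that $v\in H^1_{\psi(t,\cdot)}(\Omega)$ ``makes $\ee^{\sigma\psi}v\,\nabla\psi\in L^2(\Omega)$'' is not immediate from the definition of the space (for $\sigma<1$ it is, since $\ee^{-(1-\sigma)\psi}|\nabla\psi|$ is bounded in $x$); the clean way out is the truncation you mention, run so that the $L^2$-bound on $w\nabla\psi$ is a \emph{conclusion}: on $\Omega\cap B_R$ the identity rearranges so that $\|\nabla w\|^2$, $\sigma\Delta\psi\|w\|^2$ and $\sigma^2\|w\nabla\psi\|^2$ all sit on the favorable side, and monotone convergence as $R\to\infty$ then yields their finiteness together with the inequality. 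Second, the lemma as stated imposes no boundary condition on $v$, so strictly speaking the sign of the term $\sigma\int_{\partial\Omega}w^2(-\partial_\nu\psi)\,\dd S$ is not controlled in general; you are right that in every application in this paper $v$ is $u$ or $u_t$, which vanish on $\partial\Omega$, so this is a (minor) imprecision of the statement rather than a defect of your proof.
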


\subsection*{Acknowledgments} The Ph.D. study of Wenhui Chen is supported by S\"achsiches Landesgraduiertenstipendium. The authors thank Professor Ryo Ikehata for the suggestions in the preparation of the paper.


\end{document}